\setlist[enumerate,1]{label=(\roman*)}
\def\F{{\mathcal F}}
\def\I{{\cal I}}
\def\L{{\mathcal L}}
\def\NN{\mathbb{ N}}
\def\P{{\mathcal P}}
\def\R{{\mathbb R}}
\def\X{{\mathcal X}}
\def\Y{{\mathcal Y}}
\def\PROB{{\mathbb P}}
\def\I\nd{{\mathbb I}}
\newtheorem{theorem}{Theorem}[section]
\newtheorem{corollary}[theorem]{Corollary}
\newtheorem{lemma}[theorem]{Lemma}
\newtheorem{proposition}[theorem]{Proposition}
\newtheorem{definition}[theorem]{Definition}
\newtheorem{remark}[theorem]{Remark}
\newtheorem{example}[theorem]{Example}
\renewcommand{\epsilon}{\varepsilon}
\newcommand{\nd}{\text{nd}}
\numberwithin{equation}{section}
\title{ \bf Fundamental Properties of Process Distances
}
\author{
Julio Backhoff Veraguas
\and     Mathias Beiglb\"ock\thanks{J.\ Backhoff, M.\ Beiglb\"ock, and M.\ Eder acknowledge  support by the Austrian Science Fund (FWF) under grant Y782-N25.}
\and Manu Eder 
\and Alois Pichler 
 }
\begin{document}

\maketitle

\begin{abstract}
	
	Information is an inherent component of stochastic processes and to measure the distance between different stochastic processes it is not sufficient to consider the distance between their laws. Instead, the information which accumulates over time and which is mathematically encoded by filtrations has to be accounted for as well.
	The \emph{nested distance/bicausal Wasserstein distance} addresses this challenge by incorporating the filtration.
	It is of emerging importance due to its applications in stochastic analysis, stochastic programming, mathematical economics and other disciplines.

	This article establishes a number of fundamental properties of the nested distance. In particular we prove that the nested distance of processes generates a Polish topology  but is itself not a complete metric. We identify its completion to be the set of \emph{nested distributions}, which are a form of generalized stochastic processes.  We also characterize the extreme points of the set of couplings which participate in the definition of the nested distance, proving that they can be identified with adapted deterministic maps. Finally, we compare the nested distance to an alternative metric, which could possibly be easier to compute in practical situations.  
	
\end{abstract} 
{\bf Keywords:}  
optimal transport, nested distance, martingales, causal Wasserstein distance, information topology, Knothe--Rosenblatt rearrangement.

{\bf AMS subject classifications.}  60G05, 60G99, 90C15.

\section{Introduction}\label{Setting}

In this paper we consider several distances between  stochastic processes and investigate  their fundamental metric and topological properties. The distances we discuss are based on transport theory; we refer to Villani's monographs~\cite{Vi03,Vi09} or the lecture notes by Ambrosio and Gigli~\cite{AmGi13} for background. 
 Classical transport distances (cf.\ \cite{RaRu98,RaRu98II}) 
do not respect the information structure inherent to a multivariate distribution when this is seen as a stochastic process. It is therefore desirable to find  natural extensions of these distances that do take information/\,filtrations into account. To achieve this, one has to  adjust the definition of transport distances and include constraints involving the filtration to incorporate information  at specified times. Heuristically this means that the computation of the distance is done over transport plans/\,couplings that only move mass \emph{respecting} the \emph{causal} structure inherent to filtrations.
 
These ideas lead to the \emph{nested distance} introduced by Pflug~\cite{Pflug2009}, which is computed just like the Wassertein distance but where only transport plans/couplings respecting the given filtration are considered. Its systematic investigation was continued in \cite{PflugPichler2011, PflugPichlerBuch,PflugPichlergeneration}. 
  The nested distance has already turned out to be a crucial tool for applications in the field of multistage stochastic optimization, where problems can be computationally extremely challenging and in many situations they simply cannot be managed in reasonable time. Based on the nested distance, approximations with tractable simplifications become feasible and sharp bounds for the approximation error can be found. 
%
%
Independently, a systematic treatment and use of  causality as an interesting property of abstract transport plans and their associated optimal transport problems was initiated by  Lassalle in \cite{Lassalle}; in particular he introduces a nested distance under the name \emph{(bi)causal Wasserstein distance} and provides intriguing connections with classical geometric/\ functional analytic inequalities, as well as stochastic analysis. 
In \cite{{BBLZ}} it is argued that the renowned Knothe--Rosenblatt rearrangement (cf.\ \cite[Section 1.4]{Vi09}), also known as quantile transform in statistics, is a causal analogue to the celebrated Brenier-mapping in optimal transport. In \cite{AcBaZa16} it is established  that causality is naturally linked to the subject of enlargement of filtrations in stochastic analysis, and that continuous-time analogues to the nested distance can be used to provide robust bounds for optiml stopping problems. These articles are in the wider tradition of constrained transport problems and in particular related to martingale optimal transport (cf.~\cite{BHP,GHT,DoSo12, BeCoHu14, NuSt16} among many others).

While the nested distance has received  attention in different fields, a number of basic and fundamental questions on the corresponding  topological/\,metric structure were widely open. 
 The main goal of the present article  is to fill this gap. 
Although the nested distance is inspired by the Wasserstein distance, it turns out that there are substantial differences between these concepts. In Section~\ref{sec:Nested} we show that convergence in nested distance cannot be verified by testing against a class of usual functions. Furthermore we observe that nested distance is an  incomplete metric.
Our first main result, Theorem \ref{thm: embedding}, identifies the completion of this topology explicitly. 
This completion turns out to be the space of \emph{nested distributions}, which are in a sense generalized stochastic processes,  equipped with a classical Wasserstein distance (cf.\ \cite{Pflug2009} and Definition \ref{def:Nested} below). We thus connect two hitherto unrelated mathematical objects in an unexpected way.  In Section~\ref{sec:Weak} we establish, on the other hand, that the topology induced by the nested distance is Polish, i.e., separable and completely metrizable (cf.\ Theorem~\ref{thm polish} and comments thereafter). This is the second main result of the article. As a consequence of these considerations we can moreover find a complete metric compatible with the nested distance.

Most naturally, the nested distance is defined as a variant of a transport distance where the cost function is based on the usual Euclidean distance on $\R^N$. We want to emphasize, however, that other metrics on the underlying space $\R^N$ are often more convenient in applications, but this does not affect the topology. In the classical setup, a bounded metric on $\R^N $ induces a Wasserstein distance corresponding to the weak topology on probability measures. Likewise  a nested distance based on a bounded metric induces an information-compatible topology which might be seen as  a \emph{weak nested topology}.
%
%
We establish that this weak nested topology  coincides with the \emph{information topology} introduced by Hellwig  \cite{Hellwig} (cf.\ also~\cite{Barbie} and the references therein) for applications in the field of mathematical economics,  more specifically, sequential decision making and equilibria. 
%
Our  results in Sections~\ref{sec:Completeness} and \ref{sec:Weak} 
seem to be novel in the setup of \cite{Hellwig} and potentially applicable in mathematical economics. 
To the best of our knowledge, the concept of nested distance (in contrast to weak nested topology) is not present nor related to the works \cite{Hellwig,Barbie}. 

For computational reasons any \emph{simplification} of a measure/law of a stochastic process, is of particular interest in stochastic optimization and applications. For instance, the Knothe--Rosenblatt rearrangement and Brenier-mapping provide simple/computable transformations of measures, and so may be used to simplify the starting measure. One reason deterministic (i.e.\ Monge) maps are so useful in applications, is that they are dense in the set of all couplings with a given initial marginal, and more importantly, coincide with the extreme point of such set of couplings. In Section~\ref{sec:extreme} we will establish the analogue result when filtrations are considered: deterministic and adapted transformations of an initial measure, correspond exactly to the extreme points of the set of all couplings preserving causality (i.e.\ filtrations) and having the same initial measure. 



\medskip

\paragraph{\bf Outline.} We introduce the notation used throughout the paper and describe the mathematical setup in Section~\ref{sec:Notation}. Section \ref{sec:Nested} discusses elementary properties of the nested distance.  Section~\ref{sec:Completeness} is concerned with completeness-properties of this distance. {Then, in Section~\ref{sec:Weak}, we introduce the weak nested topology, compare it to Hellwig's information topology and establish its Polish character. } Section~\ref{sec:extreme} discusses extreme points of important sets associated with the nested distance, while Section~\ref{sec:KnotheRosenblatt} introduces the Knothe--Rosenblatt distance and provides a comparison with the nested distance.  In Section~\ref{sec:Summary} we conclude with a brief summary.

\section{Notation and mathematical setup}\label{sec:Notation}

The ambient set throughout this article is $\R^N$, which we consider as a filtered space endowed with the canonical (i.e., coordinate) filtration $\big(\F_t\big)_{t=1}^N$. (More precisely $\F_t$ is the smallest $\sigma$-algebra on $\R^N$ such that the projection $\R^N\ni x\mapsto (x_1,\dots,x_t)\in \R^t$ onto the first $t$ components is Borel-measurable, and so forth.) 

We endow $\R^N$ with the $\ell^p$-type product metric
\begin{equation}
\label{eq:dp}
d(x,y)\,:=\,d_p(x,y)\,:= \,\sqrt[p]{\sum_{t=1}^N \underline{d}(x_i,y_i)^p},\qquad p\in [1,\infty),
\end{equation}
for some base metric $\underline{d}$ on $\R$ compatible with the usual topology. We are particularly interested in the cases where $\underline{d}$ is the usual distance or is a compatible bounded metric on $\R$. Notably, for most results one may replace $\R^N$ by $S^N$, where $S$ is a Polish space and endow $S^N$ with an $\ell^p$-type product metric again. Throughout this work we fix $\underline{d}$, $p$ and $d$ as described.


The pushforward of a measure $\gamma$ by a map $M$ is denoted by $M_*\gamma:=\gamma\circ M^{-1}$.
For a product of sets $\X \times \Y$ we denote by $p^1$ ($p^2$, resp.) the projection onto the first (second, resp.) coordinate. We denote by $\gamma^x$, $\gamma^y$ the regular kernels of a measure $\gamma$ on $\X \times \Y$ w.r.t.\ its first and second coordinate, respectively, obtained by disintegration\footnote{When writing $\gamma^x(B)$ and $\gamma^y(A)$ it should be understood $\gamma^x(\{x\}\times B)$ and $\gamma^y(A\times\{y\})$ respectively.} (cf.\ \cite{Ambrosi2005}) so that $\gamma(A\times B)=\int_A \gamma^{x_1}(B)\,\gamma^1\big(\mathrm d x_1\big)$ with $\gamma^1(A):=p^1_* \gamma(A)=\gamma(A\times\Y)$.
The notation extends analogously to products of more than two spaces. 
We convene that for a probability measure $\eta$ on $\R^N$, $\eta^{x_1,\dots,x_t}$ denotes the one-dimensional measure on $x_{t+1}$ obtained by disintegration of $\eta$ w.r.t.\ $(x_1,\dots,x_t)$.

A statement like ``for $\eta$-a.e.\ $x_1,\dots,x_t$'' is meant to denote ``almost-everywhere'' with respect to the projection of $\eta$ onto the coordinates $(x_1,\dots,x_t)$. On $\R^N\times \R^N$ we denote by $(x_1,\dots,x_N)$ the first half and by $(y_1,\dots,y_N)$ the second half of the coordinates. Similarly, we use the convention that for a probability measure $\gamma$ on $\R^N\times\R^N$, $\gamma^{x_1,\dots,x_t,y_1,\dots,y_t}$ denotes the two-dimensional measure on $(x_{t+1},y_{t+1})$ given by regular disintegration of $\gamma$ w.r.t.\ $(x_1,\dots,x_t,y_1,\dots,y_t)$, so a statement like ``for $\gamma$-a.e.\ $x_1,\dots,x_t,y_1,\dots,y_t$'' is meant to denote ``almost-everywhere'' with respect to the projection of $\gamma$ onto $x_1,\dots,x_t,y_1,\dots,y_t$. 

%

The probability measures on the product space $\R^N \times \R^N$ with marginals $\mu$ and $\nu$ constitute the  possible \emph{transport plans} or \emph{couplings} between the given marginals. We denote this set by
\[
	\Pi (\mu, \nu) = \left\{\gamma \in \mathcal{P}( \R^N\times \R^N)\colon \gamma  \text{ has marginals } \mu \text{ and }\nu \right\}.
\]
We often consider processes $X=\{X_t\}_{t=1}^N$, $Y=\{Y_t\}_{t=1}^N$ defined on some probability space. %
Each pair $(X,Y)$ can be thought of as a coupling or -- abusing notation slightly -- as a transport plan upon identifying it with its law. 
For the sake of simplicity, 
being measurable with respect to a sigma algebra means to be equal to a correspondingly measurable function modulo a null set w.r.t.\ the measure relevant in the given context. 
%


%

\begin{definition}\label{def:bicausality}
A transport plan $\gamma \in \Pi(\mu,\nu)\subset \mathcal{P}( \R^N\times \R^N)$ is called \emph{bicausal} (between $\mu$ and $\nu$) if for any $B \in {\F_t}\subset \R^N$ and $t< N$, 
the mappings \[\R^N \ni x\mapsto \gamma^x(B):=\gamma^x(\{x\}\times B)\text{ and } \R^N\ni y\mapsto \gamma^y(B):=\gamma^y(B\times\{y\}),\] are ${\F_t}$-measurable. 
The set of all bicausal plans is denoted \[\Pi_{bc}(\mu, \nu).\]
\end{definition}

The product measure $\mu \otimes \nu$ is bi-causal, so $\Pi_{bc}(\mu, \nu)$ is non-empty. 
%
In terms of stochastic processes, a coupling is \emph{bicausal} if
\begin{align*}
	\PROB \big( (Y_1 ,\dots, Y_t)\in B_{t} \mid X_1,  \dots, X_{N} \big) &= \PROB \big( (Y_1 ,\dots, Y_t)\in B_t  \mid X_1, \dots X_t\big) \text{ and} \\ 
	\PROB \big( (X_1 , \dots, X_t)\in B_t \mid Y_1,  \dots, Y_N \big) &= \PROB \big((X_1, \dots, X_t)\in B_t \mid Y_1, \dots Y_t\big)
\end{align*}
for all $t={1,\dots, N}$ and $B_t \subset \R^t $ Borel.

%
%
%
Testing whether a coupling or transport plan is bicausal reduces to a property of its transition kernel. Specifically we have the following characterization (see, e.g.,~\cite{BBLZ})
\begin{proposition}\label{prop:equiv charact}
	The following are equivalent:
	\begin{enumerate}
	\item  $\gamma$ is a bicausal transport plan on $\R^N \times\R^N$ between the measures $\mu$ and $\nu$.
	\item The successive regular kernels $\bar\gamma$ of the decomposition 
	\begin{align}\label{successivedisintegration}
		\gamma(\mathrm  dx_1,&\dots,\mathrm dx_N,\mathrm  dy_1,\dots,\mathrm  dy_N)\nonumber\\
		&=\bar{\gamma}(\mathrm  dx_1,\mathrm  dy_1)\gamma^{x_1,y_1}(\mathrm  dx_2,\mathrm  dy_2)\dots \gamma^{x_1,\dots,x_{N-1},y_1,\dots,y_{N-1}}(\mathrm  dx_N,\mathrm  dy_N)
	\end{align}
	satisfy
	\[\bar{\gamma}\in \Pi(p^1_*\mu,p^1_*\nu)\] and further, for $t<N$ and $\gamma$-almost all $x_1,\dots,x_{t},y_1,\dots,y_{t}$,
	\begin{equation}\label{eq: marg mu}
		p^1_*\gamma^{x_1,\dots,x_{t},y_1,\dots,y_{t}} = \mu^{x_1,\dots,x_{t}}\text{ and } p^2_*\gamma^{x_1,\dots,x_{t},y_1,\dots,y_{t}} = \nu^{y_1,\dots,y_{t}}.
	\end{equation}
	%
	\end{enumerate}
\end{proposition}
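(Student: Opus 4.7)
The plan is to rephrase both conditions in the language of the coordinate process $(X,Y) = (X_1, \ldots, X_N, Y_1, \ldots, Y_N)$ under $\gamma$ and then reduce the equivalence to the standard fact that conditional independence of sigma-algebras can be characterized via factorization of conditional distributions. In these terms, condition (i) is the pair of causality statements
\begin{align*}
\PROB\bigl((Y_1,\ldots,Y_t) \in B \mid X_1, \ldots, X_N\bigr) &= \PROB\bigl((Y_1,\ldots,Y_t) \in B \mid X_1, \ldots, X_t\bigr), \\
\PROB\bigl((X_1,\ldots,X_t) \in B \mid Y_1, \ldots, Y_N\bigr) &= \PROB\bigl((X_1,\ldots,X_t) \in B \mid Y_1, \ldots, Y_t\bigr),
\end{align*}
each of which is equivalent to a conditional independence of the form $\sigma(X_{t+1},\ldots,X_N) \ci \sigma(Y_1,\ldots,Y_t) \mid \sigma(X_1,\ldots,X_t)$ (and its $X$-$Y$ swap). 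Condition (ii) says that $\bar\gamma$ has the prescribed marginals and that each one-step kernel $\gamma^{x_1,\ldots,x_t,y_1,\ldots,y_t}$, which by construction equals the conditional law of $(X_{t+1},Y_{t+1})$ given $(X_1,\ldots,X_t,Y_1,\ldots,Y_t)$, has $x$-marginal $\mu^{x_1,\ldots,x_t}$ and $y$-marginal $\nu^{y_1,\ldots,y_t}$.

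For the direction (ii) $\Rightarrow$ (i), I would proceed by forward induction on $s \ge t+1$ to show that $\PROB(X_s \in A \mid X_1, \ldots, X_t, Y_1, \ldots, Y_t)$ depends only on $(X_1,\ldots,X_t)$. The base case $s = t+1$ is precisely the hypothesis $p^1_* \gamma^{x_1,\ldots,x_t,y_1,\ldots,y_t} = \mu^{x_1,\ldots,x_t}$, since the left-hand side is the $x_{t+1}$-marginal of the kernel while the right-hand side, by the very definition of the disintegration of $\mu$, is the conditional law of $X_{t+1}$ given $(X_1,\ldots,X_t)$. The inductive step follows from the tower property, conditioning first on $(X_1,\ldots,X_{t+1},Y_1,\ldots,Y_{t+1})$ and then integrating $(X_{t+1},Y_{t+1})$ out against the base-case kernel. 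Extending from single coordinates to joint distributions of $(X_{t+1},\ldots,X_N)$ yields the first conditional independence and hence the first line of (i); the symmetric treatment of the $y$-marginals yields the second.

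For (i) $\Rightarrow$ (ii), the marginal identity $\bar\gamma \in \Pi(p^1_*\mu, p^1_*\nu)$ is immediate since $\bar\gamma$ is the joint law of $(X_1,Y_1)$ under $\gamma$ and the marginals of $\gamma$ on $\R^N$ are $\mu$ and $\nu$. For the kernel identities, the first causality statement, written as $\sigma(Y_1,\ldots,Y_t) \ci \sigma(X_{t+1},\ldots,X_N) \mid \sigma(X_1,\ldots,X_t)$, forces the conditional law of $X_{t+1}$ given $(X_1,\ldots,X_t,Y_1,\ldots,Y_t)$ to coincide with the conditional law of $X_{t+1}$ given $(X_1,\ldots,X_t)$, i.e.\ with $\mu^{x_1,\ldots,x_t}$. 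Since this conditional law is $p^1_* \gamma^{x_1,\ldots,x_t,y_1,\ldots,y_t}$ by disintegration, the first half of \eqref{eq: marg mu} follows $\gamma$-a.s.; the symmetric argument handles the second.

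The main technical care will lie in handling the exceptional null sets: each kernel $\gamma^{x_1,\ldots,x_t,y_1,\ldots,y_t}$ is only determined up to a null set of the relevant projection of $\gamma$, and the almost-sure equalities must be asserted simultaneously across all $t < N$ and a sufficient family of test sets. This is handled routinely by working with a countable generating $\pi$-system of Borel sets in each coordinate and invoking monotone-class arguments to globalize a single full-measure set on which all equalities hold.
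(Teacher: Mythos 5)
Your proposal is essentially correct. Note first that the paper itself does not prove Proposition~\ref{prop:equiv charact}; it simply cites the analogous result in the reference \cite{BBLZ}, so there is no in-paper proof to compare against. Your route — rephrase bicausality as the two conditional-independence statements $\sigma(X_{t+1},\dots,X_N)\ci\sigma(Y_1,\dots,Y_t)\mid\sigma(X_1,\dots,X_t)$ and its swap, then identify \eqref{eq: marg mu} as the statement that the one-step kernel of $\gamma$ matches the one-step kernel of $\mu$ (resp.\ $\nu$) in its $x$ (resp.\ $y$) marginal — is exactly the natural factorization argument, and both implications are sound.

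Two small points deserve tightening. First, in the direction (ii)~$\Rightarrow$~(i), the induction as you phrase it (``forward induction on $s\ge t+1$'' with $t$ fixed) does not quite close: the inner conditional probability $\PROB(X_s\in A\mid X_1,\dots,X_{t+1},Y_1,\dots,Y_{t+1})$ invokes the claim with $t$ replaced by $t+1$, so the induction variable should really be the gap $s-t$, uniformly over $t$ (or, equivalently, one can avoid induction altogether by integrating $y_N,y_{N-1},\dots,y_{t+1}$ out of the successive-kernel decomposition \eqref{successivedisintegration} one step at a time, using \eqref{eq: marg mu} at each step; this gives the conditional law of the whole block $(X_{t+1},\dots,X_N)$ directly, without the ``extend from single coordinates to joint laws'' afterthought). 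Second, condition (ii) on its face only prescribes the marginals of $\bar\gamma$ and the one-step kernel marginals; it does not a priori say $\gamma\in\Pi(\mu,\nu)$. You use this implicitly when writing $\mu^{X_1,\dots,X_t}$ for the $\gamma$-conditional law of $X_{t+1}$. One should observe — by integrating the $y$-variables out of \eqref{successivedisintegration} using \eqref{eq: marg mu} — that (ii) already forces $p^1_*\gamma=\mu$ and $p^2_*\gamma=\nu$, so the identification is justified. With these bookkeeping items fixed, the argument is complete.
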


\section{The nested distance}\label{sec:Nested}
Following \cite{Pflug2009,PflugPichler2011,PflugPichlerBuch} we consider for $\mu$, $\nu$ as above the \emph{$p$-nested distance}, or simply \emph{nested distance}, defined by
\begin{align}\label{nd}
	d_{p}^{\nd}(\mu,\nu) := \left ( \inf\limits_{\gamma \in \Pi_{bc}(\mu,\nu)} \iint d^p \mathrm  d\gamma \right )^{1/p} = \left ( \inf\limits_{\gamma \in \Pi_{bc}(\mu,\nu)} \iint \left [\sum_{t=1}^N \underline{d}(x_t,y_t)^p\right ] \mathrm  d\gamma \right )^{1/p} .
\end{align}
In direct analogy with the classical $p$-Wasserstein distance it defines a metric on the space%
$$\textstyle\P^p(\R^N) := \{\mu\in\P(\R^N)\colon\, \int d(x,x_0)^p\mu(\mathrm d x)<\infty\text{ for some }x_0\}.$$
%
 As noted in \cite{PflugPichler2016}, the nested distance \eqref{nd} is best suited to ``separate'' 
$\mu$ from $\nu$ if their information structure differs. In particular, the authors show that empirical measures $\mu^{emp}_n$ of a multivariate measure $\mu$ with density never converge in nested distance (even though they do converge in Wasserstein distance); the essential point here is that each empirical measure $\mu^{emp}_n$ is roughly a tree with non-overlapping branches (commonly a \emph{fan}) and therefore deterministic as soon as the first component is observed.
From an information perspective, $\mu_n^{emp}$ is radically different from $\mu$. Arguably, this is a key property 
of the nested distance and is its main distinctive characteristic and strength in comparison with the Wasserstein distance.

\subsection{Recursive computation}
A useful comment at this point is that the nested distance can be stated and computed recursively in a way which is comparable to Bellman equations: 
starting with $V_N^p:=0$ we define
\begin{align}\label{valuefunctiongeneral}
	&V_t^p(x_1,\dots,x_{t},y_1,\dots,y_{t}):= \\
	&\inf_{\gamma^{t+1} \in \Pi(\mu^{x_1,\dots,x_{t}}, \nu^{y_1,\dots,y_{t}} )} \iint\left(\begin{array}{cc}V^p_{t+1}(x_1,\dots,x_{t+1},y_1,\dots,y_{t+1})\\+\ \underline{d}(x_{t+1},y_{t+1})^p\end{array}\right) \gamma^{t+1}(\mathrm  dx_{t+1}, \mathrm  dy_{t+1}),\nonumber
\end{align}
%
so that the nested distance is finally obtained in a backwards recursive way by
\begin{equation}\label{eq:Nested}
	d_{p}^{\nd}(\mu,\nu)^p =\inf_{ \gamma^1 \in \Pi(p^1_*\mu,p^1_*\nu) }
	\iint	\left( 	V^p_1(x_1,y_1)+\ \underline{d}(x_1,y_1)^p
		\right)\gamma^1(\mathrm  dx_1,\mathrm  dy_1) .
\end{equation}

\subsection{Comparison with weak topology}
The Wasserstein distance metrizes the weak topology on probability measures with suitably integrable moments. We recall that the weak topology (also called weak* or vague topology) is characterized by integration on bounded and continuous functions. It is thus natural to ask if there is a class of functions which characterizes the topology generated by the nested distance.

\begin{proposition}\label{prop:Separating}
Let $N\geq 2$. There does \emph{not} exist a family $\mathbb F$ of functions on $\R^N$ which determines convergence for $d_p^{\nd}$. I.e., there is \emph{no} family $\mathbb F$ so that
\[
	d_{p}^{\nd}(\mu_n,\mu)\to 0 \iff \int f\mathrm d\mu_n \to \int f\mathrm d\mu\text{ for all }f\in\mathbb F,\qquad(\mu_n)_{n=1}^\infty,\ \mu\in\P^p.
\] 
In fact, such a convergence determining family does not even exist if one restricts $d_{p}^{\nd}$ to distributions supported on a bounded region $[-K,K]^N\times [-K,K]^N$, $K> 0$.
\end{proposition}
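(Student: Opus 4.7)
The plan is to argue by contradiction, leveraging the fact (attributed to \cite{PflugPichler2016} in the paragraph preceding the statement) that the empirical measures $\mu_n^{\mathrm{emp}}$ of a measure $\mu$ with density converge to $\mu$ in Wasserstein distance but never converge in nested distance. Combined with a first-countability reduction of any candidate family $\mathbb F$ to a countable subfamily, the strong law of large numbers then yields a contradiction.

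Suppose such a family $\mathbb F$ exists and fix $\mu$ with density on $[-K,K]^N$, for instance the uniform measure on $[0,1]^N$. Since $d_p^{\nd}$ is a metric, the nested topology is first-countable at $\mu$. The topology induced by $\mathbb F$ (the initial topology with respect to the linear functionals $\nu\mapsto \int f\,d\nu$) has at $\mu$ a neighborhood base of sets of the form $V_{S,\epsilon}(\mu) = \{\nu : |\int f \, d\nu - \int f \, d\mu| < \epsilon \text{ for all } f \in S\}$ with finite $S \subset \mathbb F$ and $\epsilon > 0$. Combining first-countability of $d_p^{\nd}$ with the hypothesis that nested- and $\mathbb F$-convergence of sequences coincide, one would extract a countable subfamily $\mathbb F^0 \subset \mathbb F$ whose convergence still characterizes nested convergence to $\mu$: $d_p^{\nd}(\mu_n,\mu)\to 0$ if and only if $\int f\,d\mu_n \to \int f\,d\mu$ for every $f \in \mathbb F^0$.

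Next take $(X_i)_{i \geq 1}$ iid from $\mu$ and form $\mu_n^{\mathrm{emp}} := \tfrac{1}{n}\sum_{i=1}^n \delta_{X_i}$. For each $f \in \mathbb F^0$---which must be $\mu$-integrable for $\int f\,d\mu$ to be meaningful on $\mathcal P^p$---the strong law of large numbers yields $\int f\, d\mu_n^{\mathrm{emp}} \to \int f \, d\mu$ almost surely. Since $\mathbb F^0$ is countable, a countable union of null events is null, so almost surely the convergence is simultaneous across every $f \in \mathbb F^0$. By the reduction step, this would force $\mu_n^{\mathrm{emp}} \to \mu$ in nested distance almost surely, directly contradicting the cited result that $\mu_n^{\mathrm{emp}}$ almost surely fails to converge to $\mu$ in nested distance.

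The main obstacle is the first-countability reduction: extracting the countable subfamily $\mathbb F^0$ from a possibly uncountable $\mathbb F$ is what allows the uncountable intersection of SLLN events to collapse to a countable one of full measure; without it the argument breaks down. A minor technical point is that functions in $\mathbb F^0$ must be $\mu$-integrable for SLLN to apply, which is automatic for bounded Borel functions in the compact setting $[-K,K]^N$ and more generally implicit in the requirement that $\int f\,d\mu$ be defined for every $\mu\in \mathcal P^p$.
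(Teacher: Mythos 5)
Your approach is genuinely different from the paper's, which argues directly: one first shows $\mathbb F$ would have to consist of continuous functions (by testing against Dirac masses converging in $\R^N$), and then exhibits an explicit family $\mu_\epsilon = \tfrac12[\delta_{(\epsilon,\dots,\epsilon,1)}+\delta_{(-\epsilon,\dots,-\epsilon,-1)}]$ converging weakly to $\mu=\tfrac12[\delta_{(0,\dots,0,1)}+\delta_{(0,\dots,0,-1)}]$, so that $\int f\,d\mu_\epsilon\to\int f\,d\mu$ for every continuous $f$, while $d_p^{\nd}(\mu_\epsilon,\mu)$ stays bounded away from $0$. That argument is short and self-contained, whereas yours leans on the external result that empirical measures of a density never nested-converge, combined with the SLLN.

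Unfortunately there is a genuine gap in your reduction step, and it is precisely the one you flag as the ``main obstacle.'' Having the same convergent sequences as a first-countable (metric) topology does \emph{not} allow you to extract a countable subfamily $\mathbb F^0$ that still characterizes convergence at $\mu$. The initial topology for $\mathbb F$ may be strictly coarser than the nested-distance topology and yet not first-countable, and nothing forces a countable neighborhood subbasis at $\mu$ to exist. A concrete obstruction to the general principle you invoke: in $\ell^1$ with the norm topology, the family $\mathbb F=(\ell^1)^*=\ell^\infty$ determines sequential convergence (Schur's theorem), but for any countable $\{\phi_k\}\subset\ell^\infty$ one can choose unit vectors $x_n\in\bigcap_{k\le n}\ker\phi_k$ (a finite-codimensional, hence infinite-dimensional, subspace), giving $\phi_k(x_n)\to 0$ for every $k$ while $\|x_n\|\equiv 1$. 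So ``same convergent sequences as a metric'' is strictly weaker than ``a countable subfamily suffices,'' and since the functionals $\nu\mapsto\int f\,d\nu$ in your setting are likewise linear in the measure, the analogy is close. Without the countable reduction, your SLLN step only yields, for each fixed $f\in\mathbb F$, a full-measure event on which $\int f\,d\mu_n^{\mathrm{emp}}\to\int f\,d\mu$; the intersection over an uncountable $\mathbb F$ need not have full measure, and the contradiction with the no-nested-convergence result does not follow. To repair the proof you would need to either justify the reduction by an argument special to this setting, or switch to a direct construction such as the one the paper uses.
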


\begin{proof} 
	Assume 
	that such a family exists. Without loss of generality we can further assume that the integral of $f\in\mathbb{F}$ against all measures in $\P^p$ are well-defined. By considering $\delta_{(x_1^n,\dots,x_N^n)}$, which converge in nested distance to $\delta_{(x_1,\dots,x_N)}$ if their supports do in $\R^N$, we conclude that $\mathbb{F}\subset C(\R^N)$. Set
	\[
		\mu_\epsilon:=\frac{1}{2}\left [\delta_{(\epsilon,\dots,\epsilon,1)}+ \delta_{(-\epsilon,\dots,-\epsilon,-1)}\right ]\quad\text{ and }\quad\mu:= \frac{1}{2} \left[\delta_{(0,\dots,0,1)}+ \delta_{(0,\dots,0,-1)}\right ].
	\]
	By continuity we find that $ \int f\mathrm  d(\mu_{\epsilon}-\mu)\to 0 $ as $\epsilon \to 0$ for all $ f\in \mathbb F$. Taking $\underline{d}$ to be the usual distance on $\R$ we find $d_p^{\nd}(\mu_\epsilon,\mu)\ge 2^{p-1}$, for instance via \eqref{eq:Nested}. In general one sees that $d_p^{\nd}(\mu_\epsilon,\mu)$ is bounded away of $0$. 
	Thus $\mathbb{F}$ cannot determine convergence in nested distance.
\end{proof}

\begin{remark}[Separating evaluations]
	The nested distance was initially introduced with the intention to compare stochastic programs and the question addressed by the preceding Proposition~\ref{prop:Separating} was initially posed by Pflug. Indeed, Corollary~2 in~\cite{PflugPichler2011} demonstrates that there are stochastic optimization programs with differing objective values whenever the nested distance differs.
	
	The separating objects are thus entire stochastic programs which, in view of the preceding Proposition~\ref{prop:Separating}, cannot be replaced by a set of functions on $\R^N$. The proposition further emphasizes the intrinsic relation between stochastic programs, the nested distance and the role of information.
\end{remark}

We will see in Example~\ref{rem non complete} in the next section that $d^{\nd}_p$ is \emph{not}  complete. This further demonstrates how differing the nested distance and the usual Wasserstein distance are.

\begin{remark}
 We emphasize that the metric results in this and the following section 
 and the topological results in Section \ref{sec:Weak} are also applicable if we based the $p$-nested distance on an $\ell^q$-type product norm in $\R$. Indeed, 
for each $q\in [1,\infty)$ we easily find $c$, $C>0$ s.t.\ $$c\,d(x,y) \leq d_q(x,y)\leq C\,d(x,y);$$ see \eqref{eq:dp} for notation. In particular, if we base the $p$-nested distance \eqref{nd} in terms of $d_q$ instead of $d=d_p$, we obtain a strongly equivalent metric on $\P^p$ (with the same constants $c$ and $C$). By the form of the metric $d$, we obtained a convenient and amenable expression for $d^{\nd}_p$, as seen in the r.h.s.\ of \eqref{nd}, which we would not have under $d_q$ for $q\neq p$. For these reasons, we may and will continue to work with $d_p^{\nd}$ defined in terms of $d=d_p$ keeping in mind how the forthcoming results are trivially generalizable. 
\end{remark}

\section{Completeness and completion}\label{sec:Completeness}
The space $\P^p(\mathbb R^N)$, endowed with the $p$-Wasserstein distance is complete. This is not the case for the nested distance, as the following example reveals. 
%
%

\begin{example}\label{rem non complete}
We observe that $d_p^{\nd}$ is not a complete metric as soon as the number of time steps $N$ is greater or equal than~$2$. For the sake of the argument we take $N=2$, $\underline{d}$ the usual distance on $\R$ and consider $\mu_n = 1/2\{\delta_{(1/n,1)}+\delta_{(-1/n,-1)} \}$. One verifies that $d_p^{\nd}(\mu_n,\mu_m)\leq|1/n-1/m|$, so the sequence is Cauchy. The only possible 
limit of this sequence is the limit based on the Wasserstein distance, that is $\mu =1/2\{\delta_{(0,1)}+\delta_{(0,-1)} \}$. But in nested distance we have $d_p^{\nd}(\mu_n,\mu)=(2^{p-1}+n^{-p})^{1/p}>1$, in particular this sequence does not tend to zero. 

The distinguishing point is that $\mu$ is a real tree with coinciding states at the first stage, whereas the $\mu_n$'s are not. The nested distance is designed to capture this distinction, which is ignored by the Wasserstein distance.
\end{example}

So for $N>1$ the nested distance is not complete. To identify the completion of $\P^p(\mathbb R^N)$ with respect to the $p$-nested distance we consider the \emph{nested distributions} introduced in~\cite{Pflug2009}.

\begin{definition}\label{def:Nested}
	Consider the sequence of metric spaces
	\begin{align*}
		R_{N:N}&:= (\mathbb R, d_{(N:N)}), \text{ with } d_{(N:N)}=\underline{d}=[\underline{d}^p]^{1/p},\\
		R_{N-1:N}&:= \big(\mathbb R\times \mathcal P^p(R_{N:N}), d_{(N-1:N)}\big), \text{ with }d_{(N-1:N)}=\left [\underline{d}^p + W_{d_{(N:N)},p}^p\right]^{1/p},\\
		&\vdots\nonumber\\
		R_{1:N}&:= \big((\mathbb R\times \mathcal P^p(R_{2:N})), d_{(1:N)}\big), \text{ with }d_{(1:N)}=\left[\underline{d}^p + W_{d_{(2:N)},p}^p\right]^{1/p},
	\end{align*}
	where at each stage $t$, the space $\mathcal P^p(R_{t:N})$ is endowed with the $p$-Wasserstein distance with respect to the metric $d_{(t:N)}$ on $R_{t:N}$, which we denote $W_{d_{(t:N)},p}$. 
	The set of \emph{nested distributions} (of depth $N$) with $p$-th moment is defined as $\mathcal P^p(R_{1:N})$.
\end{definition}

Each of the spaces $R_{t:N}$ ($t=1,\dots N$) is a Polish space. Indeed, a complete metric is given explicitly and the spaces are separable since $\mathcal P(R)$ is complete and separable whenever $(R,\rho)$ is complete and separable (cf.\ \cite{Bolley2008}). We endow $\mathcal P^p(R_{1:N})$ with the complete metric $W_{d_{(1:N)},p}$.

\begin{example}\label{ex computable}
When $N=2$, we have that $R_{1:2}=\mathbb{R}\times \mathcal{P}^p(\mathbb{R})$ and for $P,Q\in \mathcal P^p(R_{1:2})$ the distance is 
\begin{align}\label{eq simple nested} \textstyle
	W_{d_{(1:2)},p}(P,Q)= \left\{\inf_{\Gamma\in \Pi(P,Q)} \iint\big(\underline{d}(x,y)^p + W_p^p(\mu,\nu)  \big)\Gamma( \mathrm dx,\mathrm d\mu,\mathrm dy, \mathrm d\nu) \right\}^{1/p}
\end{align}  
with $W_p$ the classical $p$-Wasserstein distance for measures on the line and w.r.t\ the metric $\underline{d}$. 
The formulation~\eqref{eq simple nested} notably exactly corresponds to the recursive descriptions~\eqref{valuefunctiongeneral} and~\eqref{eq:Nested}.
\end{example}

\subsection{Embedding}
We demonstrate that the nested distributions of depth $N$ introduced in Definition~\ref{def:Nested} extend the notion of probability measures in $\mathbb R^N$ in a metrically meaningful way. Let us introduce the following function, already present in \cite{Pflug2009}, which associates $\mu\in\mathcal P^p(\mathbb R^N)$ with the measure $I[\mu]\in \mathcal P^p(R_{1:N})$ given by
\begin{equation}\label{eq I}
	I[\mu]:=\L\left(X_1\,,\, \L^{X_1}\Big(X_2\,,\, \cdots 
	\,,\, \L^{X_{1:N-2}}\big(X_{N-1}\,,\, \L^{X_{1:N-1}}(X_N)\big )\Big)\right)
	,
\end{equation}
where $(X_1,\dots,X_N)$ is a vector with law $\mu$. We used the shorthand $\L^{X_{1:k}}$ for the \emph{conditional law} given $(X_1,\dots,X_k)$ (and no superscript indicates unconditional law).

\begin{theorem}\label{thm: embedding} Let $d=d_p$. Then the classical Wasserstein distance of nested distributions extends the nested distance of classical distributions. 
More precisely, the mapping~$I$ defined in~\eqref{eq I} embeds the metric space $(\mathcal P^p(\mathbb R^N),d_p^{\nd})$ defined via \eqref{nd} \emph{isometrically} into the separable complete metric space $(\mathcal P^p(R_{1:N}), W_{d_{(1:N)},p})$.
In particular $(\mathcal P^p(\mathbb R^N),d_p^{\nd})$ is separable. 
\end{theorem}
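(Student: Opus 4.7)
I would prove the isometry claim by induction on $N$, leveraging the recursive descriptions~\eqref{valuefunctiongeneral}--\eqref{eq:Nested} of the nested distance together with the manifestly recursive definition of $W_{d_{(1:N)},p}$. The base case $N=1$ is trivial, since bicausality is vacuous, $R_{1:1}=\R$, $I$ is the identity, and both sides equal the classical Wasserstein distance $W_p$ on $\mathcal P^p(\R)$ with respect to $\underline d$.

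For the inductive step, the central observation is that $I[\mu]\in\mathcal P^p(R_{1:N})=\mathcal P^p(\R\times\mathcal P^p(R_{2:N}))$ is concentrated on the graph of the map $\Phi_\mu\colon x_1\mapsto (x_1,I[\mu^{x_1}])$, so that $I[\mu]=(\Phi_\mu)_*(p^1_*\mu)$; analogously for $I[\nu]$. Consequently, every coupling $\Gamma\in\Pi(I[\mu],I[\nu])$ is the pushforward $(\Phi_\mu\otimes\Phi_\nu)_*\gamma^1$ of a unique $\gamma^1\in\Pi(p^1_*\mu,p^1_*\nu)$, namely the projection of $\Gamma$ onto the first coordinates of both factors of $R_{1:N}$. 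Using this bijection and the definition $d_{(1:N)}^p=\underline d^p+W_{d_{(2:N)},p}^p$, one computes
\[
W_{d_{(1:N)},p}(I[\mu],I[\nu])^p=\inf_{\gamma^1\in\Pi(p^1_*\mu,p^1_*\nu)}\int\!\Bigl[\underline d(x_1,y_1)^p+W_{d_{(2:N)},p}^p\bigl(I[\mu^{x_1}],I[\nu^{y_1}]\bigr)\Bigr]\gamma^1(\mathrm d x_1,\mathrm d y_1).
\]
By the inductive hypothesis applied to $\mu^{x_1},\nu^{y_1}\in\mathcal P^p(\R^{N-1})$, the inner term equals $d_p^{\nd}(\mu^{x_1},\nu^{y_1})^p=V_1^p(x_1,y_1)$, and comparison with~\eqref{eq:Nested} yields the desired equality.

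The separability claim is then immediate: Polishness of each $R_{t:N}$ (noted right after Definition~\ref{def:Nested}) propagates through the Wasserstein construction, so $(\mathcal P^p(R_{1:N}),W_{d_{(1:N)},p})$ is separable; as a metric subspace under an isometric embedding, $(\mathcal P^p(\R^N),d_p^{\nd})$ inherits separability.

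\paragraph{Main obstacle.}
The delicate point is rigorously justifying the ``graph support'' step, i.e.\ that $I[\mu]$-almost every pair $(x_1,m)$ satisfies $m=I[\mu^{x_1}]$ and, consequently, that $\Gamma\in\Pi(I[\mu],I[\nu])$ is determined by its first-coordinate marginal $\gamma^1$. This requires the existence and measurability of regular conditional kernels $x_1\mapsto\mu^{x_1}$ with values in $\mathcal P^p(R_{2:N})$, together with measurability of the map $x_1\mapsto I[\mu^{x_1}]$, so that $\Phi_\mu$ is Borel and the pushforward identity $I[\mu]=(\Phi_\mu)_*(p^1_*\mu)$ holds. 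The measurability of $(x_1,y_1)\mapsto W_{d_{(2:N)},p}^p(I[\mu^{x_1}],I[\nu^{y_1}])$ then follows from continuity of $W_{d_{(2:N)},p}$ on $\mathcal P^p(R_{2:N})$, ensuring the integrand in the recursive formula is well-defined and measurable at each stage of the induction.
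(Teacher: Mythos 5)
Your proof is correct and follows essentially the same route as the paper: both arguments rest on the observation that $I[\mu]$ is supported on the graph of the Borel map $x_1\mapsto(x_1,I[\mu^{x_1}])$, so that every coupling in $\Pi(I[\mu],I[\nu])$ is the pushforward of a unique coupling of the first marginals, which reduces the Wasserstein distance on $\mathcal P^p(R_{1:N})$ to the recursive formula~\eqref{eq:Nested}. The only cosmetic difference is that you carry out an explicit induction on $N$ whereas the paper proves the case $N=2$ (where the claim $V_1^p(x_1,y_1)=W_p^p(\mu^{x_1},\nu^{y_1})$ is immediate) and states that the general case is notationally identical; your version makes the dynamic-programming identity $V_1^p(x_1,y_1)=d_p^{\nd}(\mu^{x_1},\nu^{y_1})^p$ explicit, which is a fair way to organize the same content.
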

\begin{proof}
	It is enough to consider $N=2$. For a probability measure $\mu$ on $\mathbb R^2$ consider its disintegration measure
	\begin{equation}\label{eq:7}\notag \textstyle
		\mu(A\times B)= \int_A \mu^{x_1}(B) p^1_*\mu(\mathrm d x_1),
	\end{equation}
	where $p^1$ is the projection onto the first coordinate.
	An embedding of $\mu$ in the space $\mathcal P\big(\mathbb R\times \mathcal P(\mathbb R)\big)$ is given by the probability measure generated uniquely by (here $A,B$ are Borel sets of $\mathbb R$ and $\mathcal P(\mathbb R)$ respectively)
	\[I[\mu](A\times B):=\mu \big(A\cap T_{\mu}^{-1}(B)\big)= p^1_*\mu \big(A\cap T_{\mu}^{-1}(B)\big),\]
	where $T_{\mu}$ is the Borel measurable function
	\begin{align*}
		T_{\mu}: \mathbb R&\to \mathcal P(\mathbb R)\\
		x_1&\mapsto \mu^{x_1}(\mathrm dx_2)
	\end{align*}
	In this way we find that $I[\mu]$ is the $\mu$-law of $x_1\mapsto (x_1,\mu^{x_1}(\mathrm d x_2))$. For $\mu\in \mathcal P^p(\mathbb R^2)$ we also have
	\begin{align*}
	\int\left \{ \underline{d}(x,0)^p+W_p^p(\nu,\delta_0) \right \} I[\mu](\mathrm dx,\mathrm d\nu)&= \int\left \{ \underline{d}(x_1,0)^p+W_p^p(\mu^{x_1},\delta_0) \right \} p^1_*\mu(\mathrm dx_1)\\
	&= \int\left \{ \underline{d}(x_1,0)^p+\underline{d}(x_2,0)^p \right \}\mu(\mathrm dx_1,\mathrm dx_2)<\infty
	\end{align*}
	and thus $I[\mu]\in  \mathcal P^p(R_{1:2})$.

	We now observe that the embedding $\mu\mapsto I[\mu]$ is actually an isometry between $(\mathcal P^p(\mathbb R^N),d_p^{\nd})$ and $(\mathcal P^p(R_{1:N}), W_{d_{(1:N)},p})$. To this end, first note that every coupling between $I[\mu]$ and $I[\nu]$ (i.e., every $\Gamma \in \Pi(I[\mu],I[\nu])$) is of the form $\bar{\gamma}(\mathrm dx_1,\mathrm dy_1)\delta_{T_{\mu}(x_1)}(\mathrm d M)\delta_{T_{\nu}(y_1)}(\mathrm d N)$ for some $\bar{\gamma}\in \Pi(p^1_*\mu,p^1_*\nu)$ and vice-versa. Hence from~\eqref{eq simple nested} and~\eqref{valuefunctiongeneral} we have that
	\begin{align}
		W_{d_{(1:2)},p}\big(I[\mu],I[\nu]\big)^p&= \inf_{\bar{\gamma}\in \Pi(p^1_*\mu,p^1_*\nu)} \int\left \{ \underline{d}(x_1,y_1)^p + W_p^p(\mu^{x_1},\nu^{y_1}) \right   \}\bar{\gamma}(\mathrm dx_1,\mathrm dy_1)\notag \\
		&= d_p^{\nd}(\mu,\nu)^p, \label{eq intermediate lift}
	\end{align}
by \eqref{eq:Nested} and hence the isometry. Finally, since the image of $I$ is a subspace of the separable metric space $(\mathcal P^p(R_{1:N}), W_{d_{(1:N)},p})$, it is separable itself. We conclude that $(\mathcal P^p(\mathbb R^N),d_p^{\nd})$ is separable too.
\end{proof}

\begin{remark}
From the preceding arguments follows that the embedding~$I$ in~\eqref{eq I} is onto if and only if $N=1$. 
\end{remark}

\begin{remark}
Returning to Example \ref{ex computable} and applying \eqref{eq intermediate lift} we find for $\mu,\nu\in\mathcal P^1(\R^2)$
\begin{align*}
 d_1^{\nd}(\mu,\nu)&= W_{d_{(1:2)},1}\big(I[\mu],I[\nu]\big)\\
& =\textstyle \sup\left\{ \int F(x,\mu^x)p^1_*\mu(\mathrm d x) -\int F(x,\nu^x)p^1_*\nu(\mathrm d x)  \right \},
\end{align*}
where the supremum is taken over all (bounded) functions $F:\R\times\mathcal P^1(\R)\to\R$ with Lipschitz constant at most one (with respect to the metric $\underline{d}+ W_1$). Indeed, this is nothing but the Kantorovich-Rubinstein Theorem (\cite[Theorem 1.14]{Vi03}) for the 1-Wasserstein metric on $\mathcal P^1(\R\times\mathcal P^1(\R))$. Similar results apply for $\mu,\nu\in\mathcal P^1(\R^N)$ by using $R_{1:N}$ instead.

\end{remark}

\subsection{Completion}

We now identify 
the completion of $(\mathcal P^p(\mathbb R^N),d_p^{\nd})$. (Recall that the completion of a metric space is unique up to isomorphism.)
This result provides a solid link between these two previously separate mathematical objects.

\begin{theorem}\label{Thm completion}
The space $\big(\mathcal P^p(R_{1:N}), W_{d_{(1:N)},p}\big)$ od nested distribution is the completion of $\big(\mathcal P^p(\mathbb R^N),d_p^{\nd}\big)$.
\end{theorem}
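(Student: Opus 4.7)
The plan is to invoke the standard characterization of a completion: if $(X,d_X)$ embeds isometrically into a complete metric space $(Y,d_Y)$ and the image is dense, then $(Y,d_Y)$ is (isomorphic to) the completion of $(X,d_X)$. The isometric embedding $I$ has already been supplied by Theorem~\ref{thm: embedding}, and the ambient space $\big(\mathcal P^p(R_{1:N}),W_{d_{(1:N)},p}\big)$ is complete since $R_{1:N}$ is Polish (as noted right after Definition~\ref{def:Nested}) and the $p$-Wasserstein space over a Polish space is Polish. Hence the only thing left to prove is the \emph{density} of $I(\mathcal P^p(\mathbb R^N))$ in $(\mathcal P^p(R_{1:N}),W_{d_{(1:N)},p})$.

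I would prove density by induction on $N$. The base case $N=1$ is immediate since $R_{1:1}=\R$ and $I$ is the identity. For the inductive step, fix $P\in \mathcal P^p(R_{1:N})$ and approximate it in three stages. First, use the classical fact that finitely supported probability measures are $W_{d_{(1:N)},p}$-dense in $\mathcal P^p(R_{1:N})$ to find $P_n=\sum_{i=1}^{k_n}p_i^n\,\delta_{(x_1^{n,i},M^{n,i})}$ with $W_{d_{(1:N)},p}(P_n,P)\to 0$. Second, perturb the first coordinates by arbitrarily small amounts $\varepsilon_{n,i}$ so that the values $x_1^{n,i}+\varepsilon_{n,i}$ become pairwise distinct; this yields a new measure $\widetilde P_n$ with $W_{d_{(1:N)},p}(\widetilde P_n,P_n)\le\bigl(\sum_i p_i^n|\varepsilon_{n,i}|^p\bigr)^{1/p}$, which is as small as we wish. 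Third, invoke the inductive hypothesis at depth $N-1$ to approximate each $M^{n,i}\in\mathcal P^p(R_{2:N})$ by some $M^{n,i}_j=I_{2:N}[\nu^{n,i}_j]$ with $\nu^{n,i}_j\in \mathcal P^p(\R^{N-1})$, and define
\[
\widehat P_{n,j}\ :=\ \sum_{i=1}^{k_n}p_i^n\,\delta_{(x_1^{n,i}+\varepsilon_{n,i},\,M^{n,i}_j)}.
\]
Because the first-coordinate atoms are now distinct, the map $x_1^{n,i}+\varepsilon_{n,i}\mapsto \nu_j^{n,i}$ is well defined, so setting
\[
\mu_{n,j}\ :=\ \sum_{i=1}^{k_n}p_i^n\,\delta_{x_1^{n,i}+\varepsilon_{n,i}}\otimes \nu^{n,i}_j\ \in\ \mathcal P^p(\R^N)
\]
gives exactly $I[\mu_{n,j}]=\widehat P_{n,j}$ by the definition of $I$ in~\eqref{eq I}. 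Choosing a suitable diagonal $j=j(n)$ and combining the three approximations via the triangle inequality shows $W_{d_{(1:N)},p}(\widehat P_{n,j(n)},P)\to 0$, establishing density.

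The main obstacle is purely notational: making sure that after discretization and perturbation the ``deterministic lift'' $\mu_{n,j}$ really does pull back to $\widehat P_{n,j}$ under $I$. The key point, already present in the proof of Theorem~\ref{thm: embedding}, is that a nested distribution $P$ lies in $I(\mathcal P^p(\R^N))$ iff the second factor in $\R\times \mathcal P^p(R_{2:N})$ is (a.s.) a deterministic function of the first, and this is manifestly guaranteed when the first-coordinate atoms are distinct and the inner measures are themselves already in the image of $I_{2:N}$. The role of the perturbation step is precisely to \emph{break ties} at the first stage, which is necessary since otherwise two atoms of $P_n$ sharing the same $x_1$ but carrying different $M^{n,i}$ cannot arise as $I[\mu]$ of any $\mu\in\mathcal P^p(\R^N)$; this is essentially the same phenomenon that drives the incompleteness example in Example~\ref{rem non complete}, read backwards.
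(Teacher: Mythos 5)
Your plan follows essentially the same approach as the paper's proof: show that $I$ (from Theorem~\ref{thm: embedding}) is an isometric embedding into the complete space $\big(\mathcal P^p(R_{1:N}),W_{d_{(1:N)},p}\big)$, use density of finite convex combinations of Dirac measures, and perturb first-stage atoms to make them pairwise distinct so that the discrete measure can be realized as $I[\mu]$ for a suitable $\mu$. The perturbation $\varepsilon_{n,i}$ plays exactly the same role as the paper's choice of $A^n\to A$ with distinct coordinates.

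The one substantive difference is that you run a formal induction on $N$, whereas the paper works out $N=2$ and claims the general case is ``identical'' up to notation. Your induction actually makes explicit a step the paper glosses over: for $N>2$, a Dirac atom $\delta_{(a,M)}\in\mathcal P^p(R_{1:N})$ has $M\in\mathcal P^p(R_{2:N})$, and such an $M$ is \emph{not} automatically of the form $I_{2:N}[\nu]$ for $\nu\in\mathcal P^p(\R^{N-1})$ — unlike in the $N=2$ case, where $M\in\mathcal P^p(\R)$ is trivially so. Your third approximation step (replacing $M^{n,i}$ by $I_{2:N}[\nu_j^{n,i}]$ using the inductive hypothesis) is precisely what is needed to close this gap, and the subsequent diagonalization is standard. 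So the argument is correct and, while in the same spirit, is a cleaner and more rigorous treatment of the general-$N$ case than the paper gives.
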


\begin{proof}
We  need  to provide an isometry $J$ from $(\mathcal P^p(\mathbb R^N),d_p^{\nd})$ into  $ (\mathcal P^p(R_{1:N}), W_{d_{(1:N)},p})$
whose range is dense. 
We shall prove that $I$ defined in~\eqref{eq I} does this task. This can be done for  arbitrary $N$ at a notational, while already the case $N=2$ is representative of the general situation. We thus assume $N=2$ in what follows. 

The set of convex combinations of Dirac measures is dense in $\mathcal P^p(R_{1:N})$ w.r.t.\ the metric $ W_{d_{(1:N)},p}$. This is actually true for any Wasserstein metric (cf.\ \cite{Bolley2008}) and thus particularly for $W_{d_{(1:2)},p}$, which in itself is a Wasserstein metric (see also Example~\ref{ex computable} for concreteness). So it suffices to prove that convex combinations of Dirac measures lie in the closure of the range of $I$.

Let $A:=(a_1,\dots,a_k)$ be a $k$-tuple of points in $\mathbb R$ and $m_1,\dots,m_k$ be measures on the line with finite $p$-th moment. Given weighs $\{\lambda_i\}_{i=1}^k$ we are interested in the measure $$P(\mathrm dx, \mathrm dm)=\sum \lambda_i \delta_{(a_i,m_i)}(\mathrm dx,\mathrm dm),$$ over $R_{1:2}$. Now we take any sequence  $A^n:=\{a^n_1,\dots,a^n_k\}$ such that componentwise $A^n\to A$ as $n\to \infty$ and, for each $n$ fixed, all coordinates of $A^n$ are distinct.
%
%
We now define $\mu_n\in \mathcal P^p(\mathbb R^2)$ as the measure whose first marginal is $\sum \lambda_j \delta_{{a}^n_j}$ and such that $\mu_n(\mathrm dx_2|x_1={a}^n_j)=m_j(\mathrm dx_2)$.
It is elementary, and this is the main point of having made the ${a}^n_j$'s distinct for a fixed $n$, that \[I[\mu_n]= \sum \lambda_j\delta_{({a}^n_j,m_j)}.\]
Consequently we get that $I[\mu_n]\to P$ with respect to $ W_{d_{(1:2)},p}$ when $n\to\infty$, as desired.
%
\end{proof}

%


At this point we ask:

 {\begin{center}can $(\mathcal P^p(\mathbb R^N),d_p^{\nd})$ still be Polish, for $N\geq 2$?\end{center}	}
\noindent Just as the interval $(0,1)$ is a Polish subspace of $[0,1]$, which is nevertheless incomplete w.r.t.\ the usual distance on $[0,1]$, neither Example~\ref{rem non complete} nor Theorem~\ref{Thm completion} contribute anything to this question. We explore this in the following section.


\section{The information topology/\,weak nested topology}\label{sec:Weak}

We introduce the space $\P(R_{1:N})$ just as we did for $\P^p(R_{1:N})$, but now denoting $R_{t-1:N}:=\mathbb R\times \mathcal P(R_{t:N})$ at each step of the recursive definition and equipping $R_{t-1:N}$ with the product topology of Euclidean distance in the first component and the usual weak topology in the second one. Doing so, we conclude that $\P(R_{1:N})$ is a Polish space of measures on the likewise Polish space $R_{1:N}$. Inspired by the isometric embedding in Theorem~\ref{thm: embedding}, which we denoted $I$ in \eqref{eq I}, a mapping $I:\P(\R^N)\to \P(R_{1:N})$ can be obtained by direct generalization. 

\begin{definition}\label{def:Information}
We say that a net $\{\mu_\alpha\}_\alpha$ in $\P(\R^N)$ converges \emph{weakly nested} to $\mu\in\P(\R^N)$, if and only if $I[\mu_\alpha]$ converges weakly in $\P(R_{1:N})$ to $I[\mu]$. We 
call the corresponding topology \emph{weak nested topology}. 
\end{definition}

Thus the weak nested topology is simply the initial topology for the map $I$.

\begin{remark}\label{rem Hell}
Suppose that $X$ is Polish and that $\rho$ is a compatible complete metric which is \emph{bounded}. Then the corresponding $p$-Wasserstein topology is precisely the weak topology.  Likewise, we obtain that the weak nested topology is generated by the nested distance $d^\nd_{p}$ as soon as we choose $\underline{d}$ as a compatible bounded metric for the usual topology on $\R$. For instance, we may take  
 the metric $\underline{d}(a,b)=|a-b|\wedge 1$ so
 \begin{align}\notag
  d(x,y)= \sum_{t=1}^N |x_t-y_t| \wedge 1. 
 \end{align}
In this way we obtain that the weak nested topology coincides with a $p$-nested topology of the form we have already treated.
 
\end{remark}

Although there are more direct ways to prove it, the previous remark implies the following:

\begin{lemma}
The weak nested topology is separable and metrizable. 
\end{lemma}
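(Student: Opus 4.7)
The plan is to reduce the statement to the content of Section~\ref{sec:Completeness} via Remark~\ref{rem Hell}. Concretely, I would fix the bounded compatible metric $\underline{d}(a,b):=|a-b|\wedge 1$ on $\R$ and form the corresponding product metric $d=d_p$ from~\eqref{eq:dp}. Since $d(x,y)^p=\sum_{t=1}^N \underline{d}(x_t,y_t)^p \leq N$, the metric $d$ is bounded, so every Borel probability measure on $\R^N$ automatically has finite $p$-th moment and therefore $\P^p(\R^N)=\P(\R^N)$. Remark~\ref{rem Hell} then tells us that the weak nested topology on $\P(\R^N)$ coincides with the topology generated by the nested distance $d_p^{\nd}$ built from this particular $\underline{d}$.

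With the weak nested topology realized as a nested-distance topology, Theorem~\ref{thm: embedding} finishes the job: the map $I$ of~\eqref{eq I} isometrically embeds $(\P^p(\R^N),d_p^{\nd})$ into the separable metric space $(\P^p(R_{1:N}),W_{d_{(1:N)},p})$. Metrizability is witnessed by $d_p^{\nd}$ itself, while separability is inherited from the ambient separable metric space, since any subspace of a separable metric space is separable.

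There is no real obstacle here; the lemma is essentially a corollary of the two cited results. The only nontrivial ingredient is Remark~\ref{rem Hell}, whose verification reduces to the standard fact that a Wasserstein distance based on a bounded compatible metric metrizes the weak topology. An alternative route, bypassing Remark~\ref{rem Hell}, is to note that $I:\P(\R^N)\to\P(R_{1:N})$ is injective (successive conditional laws recover the joint law by the tower property) and that, by Definition~\ref{def:Information}, the weak nested topology is the initial topology pulled back from $I$ along this injection into the Polish space $\P(R_{1:N})$; both separability and metrizability then follow immediately.
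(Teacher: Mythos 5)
Your main argument is exactly the paper's intended proof: the lemma is stated immediately after Remark~\ref{rem Hell} with the comment that ``the previous remark implies the following,'' so the route through a bounded compatible $\underline{d}$ (making $\P^p(\R^N)=\P(\R^N)$ and identifying the weak nested topology with the nested-distance topology) followed by the isometric embedding of Theorem~\ref{thm: embedding} is precisely what the authors have in mind. Your alternative argument --- noting that $I$ is injective and that the weak nested topology is by Definition~\ref{def:Information} the initial topology for $I$ valued in the separable metrizable (indeed Polish) space $\P(R_{1:N})$, so metrizability and separability follow by pulling back a compatible metric and passing to the image subspace --- is also correct, and is evidently one of the ``more direct ways'' the paper alludes to but does not spell out.
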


{

\subsection{Comparison with an existing concept}\label{comparison Hellwig}

	Definition~\ref{def:Information} is related to the so-called \emph{topology of information} which was  introduced in \cite{Hellwig} for the purpose of sequential decision problems and equilibria (see also \cite{Barbie} for a recent update). In our setting, this topology is defined as the initial topology for the following maps on $\mathcal{P}(\R^N)$:
	\begin{align*}
	\mu&\mapsto \mu \,\,\,\in \P(\R^N),\\ 
	\mu& \mapsto [x_1\mapsto(x_1,\mu^{x_1}(\mathrm dx_2,\dots, \mathrm dx_N))]_*\mu \,\,\,\in \P(\R\times \P(\R^{N-1})),\\
		\mu& \mapsto [(x_1,x_2)\mapsto(x_1,x_2,\mu^{x_1,x_2}(\mathrm dx_3,\dots, \mathrm dx_N))]_*\mu \,\,\,\in \P(\R^2\times \P(\R^{N-2})),\\
	&\vdots\\
	\mu& \mapsto [(x_1,\dots,x_{N-1})\mapsto(x_1,\dots,x_{N-1},\mu^{x_1,\dots,x_{N-1}}(\mathrm dx_N))]_*\mu \,\,\,\in \P(\R^{N-1}\times \P(\R)) ,
	\end{align*}
	where the range spaces are endowed with the usual weak topologies.

	For $N=1,2$ this topology obviously coincides with the weak nested one of Definition \ref{def:Information}. As a matter of fact, this is always the case. We demonstrate the argument for $N=3$:
	
Let $A,B$ continuous bounded function on $\R\times \P(\R^2)$ and $\R^2\times \P(\R)$, respectively. We denote by $m,M$ generic elements in $\P(\R)$ and $\P(\R\times \P(R))$, respectively. Then
\begin{align*}
	\R\times \P(\R\times \P(R))\ni (x_1,M)& \mapsto \bar{A}(x_1,M):=\, A\left(x_1, \int_{\R\times\P(\R)} m\, M(\mathrm d x_2,\mathrm d m) \right),\\
	 \R\times \P(\R\times \P(R))\ni (x_1,M)& \mapsto \bar{B}(x_1,M):=\,
	\int_{\R\times\P(\R)} B(x_1,x_2,m)M(\mathrm d x_2, \mathrm dm), 
\end{align*}	 	
	 are seen to be continuous bounded functions too, thus they are suitable test functions for weak nested convergence (since $R_{1:3}=\R\times \P(\R\times \P(R))$ precisely). One then easily verifies that
	 \begin{align*}
	\int \bar{A}(x_1,M)I[\mu](\mathrm dx_1,\mathrm d M)&= \int_{\R} A(x_1,\mu^{x_1}(\mathrm dx_2,\mathrm dx_3))\mu(\mathrm d x_1) , \\
	\int \bar{B}(x_1,M)I[\mu](\mathrm dx_1,\mathrm d M)&= \int_{\R^2} B(x_1,x_2,\mu^{x_1,x_2}(\mathrm dx_3)) \mu(\mathrm d x_1,\mathrm d x_2) , 
\end{align*}	 
so convergence of the l.h.s\ (guaranteed by weak nested convergence) implies that of the r.h.s.\ which then implies convergence in information topology. Thus the weak nested topology is stronger than the topology of information. For the converse, recall first that 
$$\mu\in \P(\R^3)\mapsto I[\mu]=\L(X_1\, ,\, \L^{X_1}(X_2 \, ,\, \L^{X_1,X_2}(X_3))),$$
where $(X_1,X_2,X_3)$ is distributed according to $\mu$. If we denote
\begin{align*}
m\in \P(\R^2)&\mapsto T[m]:= [x\mapsto (x,m^x(\mathrm dy))]_*m\in \P(\R\times\P(\R)),\\
(x,m)\in \R\times \P(\R^2) &\mapsto L(x,m):= (x,T[m])\in \R\times\P(\R\times\P(\R)),\\
p\in \P(\R^3)&\mapsto \phi(p):=[x\mapsto (x,p^x(\mathrm dy,\mathrm dz))]_*p\in \P(\R\times\P(\R^2)),
\end{align*}
one finds that $I[\mu]=L_*\phi(\mu)$. By definition $\phi $ is continuous in topology of information. The key now is~\cite[Lemma 7]{Hellwig}, from which $\phi$ is also continuous if on the range space,  $\P(\R\times\P(\R^2))$, we endow $\P(\R^2)$ with the information topology again. Since $L$ is continuous when the domain is given this topology, we finally conclude that $L_*\phi(\mu)$ is continuous in information topology, making the latter stronger than the weak nested one.

Because we are inspired by the nested distance of Pflug and Pichler and due to the observation in Remark \ref{rem Hell} that the weak nested topology (so a fotriori the information topology) is a nested distance topology, we shall use the term ``weak nested topology'' instead of ``information topology'' in the following. The results to come are also new in the setting of \cite{Hellwig,Barbie}.

	 


}

\subsection{A closer look at the weak nested topology}

We will establish that the weak nested topology (and actually the nested distance topologies) is Polish. 


We recall that a set of a topological spaces is a $G_\delta$ if it is the countable intersection of open sets. Recall also that every separable metrizable space is homeomorphic to a subspace of the Hilbert cube $[0,1]^\NN$, the latter equipped with the product topology; see \cite[Theorem 4.14]{kechris}. A compatible metric on the Hilbert cube is given by \[D\big((x_n),(y_n)\big):=\sum_{n=1} 2^{-n}|x_n-y_n|.\]

\begin{lemma}
\label{lem graph}
Let $m\in\P(X\times Y)$ with $X$ Polish and $(Y,\rho)$ a separable metric space. Denote $\iota :Y\to [0,1]^\NN$ the embedding of $Y$ into the Hilbert cube. Then the following are equivalent:
\begin{enumerate}
	\item \label{enu:11} $m\big(\mbox{Graph}(f)\big)=1$ for $f:X\to Y $ Borel;
	\item \label{enu:22}$\inf\left\{\int_{X\times Y} \rho\big(f(x),y\big)  m(\mathrm dx,\mathrm dy) :\,\, f:X\to Y \text{ Borel} \right\} = 0$;
	\item \label{enu:33}$\inf\left\{\int_{X\times Y} D\big(F(x),\iota(y)\big)  m(\mathrm dx,\mathrm dy) :\,\, F:X\to [0,1]^\NN \text{ Borel} \right\} = 0$;
	\item \label{enu:44}$\inf\left\{\int_{X\times Y} D\big(F(x),\iota(y)\big)  m(\mathrm dx,\mathrm dy) :\,\, F:X\to [0,1]^\NN \text{ continuous} \right\} = 0$.
\end{enumerate}
\end{lemma}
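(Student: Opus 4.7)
My plan is to prove the cyclic chain (i)$\Rightarrow$(ii)$\Rightarrow$(iii)$\Rightarrow$(iv)$\Rightarrow$(i). The first three implications are essentially routine while the fourth carries the main content.

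The implication (i)$\Rightarrow$(ii) is immediate: the same Borel $f$ from (i) makes the integrand vanish $m$-a.e. For (ii)$\Rightarrow$(iii), given a sequence of Borel $f_n$ witnessing the infimum in (ii), set $F_n:=\iota\circ f_n$. Since $\rho(f_n(x),y)\to 0$ in $m$-measure and $\iota$ is continuous, $D(F_n(x),\iota(y))\to 0$ in $m$-measure as well; as $D\le 1$, bounded convergence yields $\int D(F_n,\iota) dm \to 0$.

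For (iii)$\Rightarrow$(iv) I would fix a Borel $F:X\to [0,1]^\NN$ achieving integral less than $\varepsilon$ and approximate it by a continuous one. Apply Lusin's theorem to $F$ with respect to the first marginal $m_X := p^1_*m$ on the Polish space $X$: there is a closed $K\subset X$ with $m_X(K)>1-\delta$ such that $F|_K$ is continuous. Since $[0,1]^\NN$ is convex (equivalently an absolute retract), Dugundji's extension theorem (or Tietze applied coordinatewise) produces a continuous extension $G:X\to[0,1]^\NN$ of $F|_K$. Using $D\le 1$ we get
\[
\int D(G(x),\iota(y))\, dm \le \int D(F(x),\iota(y))\, dm + m_X(X\setminus K) < \varepsilon+\delta,
\]
and letting $\varepsilon,\delta\to 0$ yields (iv).

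The crux is (iv)$\Rightarrow$(i). Pick continuous $F_n$ with $\int D(F_n(x),\iota(y)) dm\to 0$ and pass to a subsequence so that $D(F_n(x),\iota(y))\to 0$ for $m$-a.e.\ $(x,y)$. Let $A\subset X$ be the Borel set where the pointwise limit $G(x):=\lim_n F_n(x)\in[0,1]^\NN$ exists; then $G$ is Borel on $A$ and for $m$-a.e.\ $(x,y)$ we have $x\in A$ and $\iota(y)=G(x)$. Disintegrate $m = m_X \otimes m^x$; then for $m_X$-a.e.\ $x$, $m^x$ is concentrated on $\iota^{-1}(\{G(x)\})$, which is at most one point since $\iota$ is injective. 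Hence $m^x=\delta_{f(x)}$ for a uniquely determined $f(x)\in Y$ satisfying $\iota(f(x))=G(x)$. Borel measurability of $f$ follows because $\iota$ is a homeomorphism onto its image $\iota(Y)$ and, under the standing Polish assumption on the ambient setting, $\iota^{-1}$ is Borel on $\iota(Y)$; extending $f$ arbitrarily (e.g.\ to a fixed $y_0\in Y$) outside a Borel full-measure set yields a Borel map with $m(\mathrm{Graph}(f))=\int m^x(\{f(x)\})\, m_X(dx)=1$. The main technical obstacle in this step is the bookkeeping of measurability: ensuring that the a.e.\ limit $G$ takes values in $\iota(Y)$ on a Borel set of full measure and that $\iota^{-1}\circ G$ is Borel, which is what forces one to exploit both the completeness/Polishness of the ambient space and the injectivity of the Hilbert cube embedding.
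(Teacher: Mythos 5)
Your proof is correct and follows the same cyclic chain (i)$\Rightarrow$(ii)$\Rightarrow$(iii)$\Rightarrow$(iv)$\Rightarrow$(i) as the paper, but two of the implications are handled differently. For (iii)$\Rightarrow$(iv) the paper simply invokes coordinate-wise density of $C(X)$ in $L^1(X,\mu;[0,1])$, which is a touch leaner than your Lusin-plus-Dugundji/Tietze route; both are standard and equally valid. The more interesting divergence is in (iv)$\Rightarrow$(i): the paper integrates out $y$ to form $G_n(x)=\int D(F_n(x),\iota(y))\,m^x(\mathrm dy)$, passes to an a.e.-convergent subsequence in $x$, and then rules out $|\mathrm{supp}(m^x)|>1$ by a contradiction argument using two disjoint compacts in the support of $m^x$. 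You instead pass to a subsequence along which $D(F_n(x),\iota(y))\to 0$ for $m$-a.e.\ $(x,y)$ on the \emph{product} space and read off directly that $\iota(y)=\lim_n F_n(x)=:G(x)$ for $m^x$-a.e.\ $y$, so $m^x$ is a Dirac by injectivity of $\iota$. Your version avoids the support/compactness argument and is arguably more transparent, and it hits exactly the right measurability obstacle, namely that the domain on which $G$ lands in $\iota(Y)$ need not be Borel a priori; one resolves it as you indicate, by restricting to a Borel full-measure subset (for instance $\{x\in A: m^x(N^c_x)=1\}$ for the Borel convergence set $N^c\subset X\times Y$) and composing with the continuous $\iota^{-1}$ on $\iota(Y)$. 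One minor inaccuracy: you appeal to a ``standing Polish assumption'' to get Borel measurability of $\iota^{-1}$, but $Y$ is only assumed separable metric; what you actually need (and have) is that $\iota$ is a homeomorphism onto its image, so $\iota^{-1}$ is continuous on $\iota(Y)$ regardless of completeness.
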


\begin{proof}
Clearly \ref{enu:11}$\implies$\ref{enu:22}$\implies$\ref{enu:33}. Denote $\mu$ the first marginal of $m$. Given  $F:X\to [0,1]^\NN $ Borel, $F(x)=(F_n(x))_n$, we can approximate it in $L^1(X,\mu;[0,1]^\NN)$ by continuous functions. This follows since coordinate-wise we can approximate $F_n\in L^1(X,\mu;[0,1])$ by continuous functions. So also \ref{enu:33}$\implies$\ref{enu:44}.

To establish \ref{enu:44}$\implies$\ref{enu:11} let $\{F_n\}$ be a sequence of continuous functions approximating the infimum in~\ref{enu:44} and denote $G_n(x):=\int D\big(F_n(x),\iota(y)\big)m^x(\mathrm dy)$ so $G_n$ is Borel, non-negative and $\|G_n\|_{L^1(X,\mu;\R)}\to 0$ by definition. It follows that $G_n\to 0$ in $L^1(X,\mu;\R)$ so up to a subsequence $G_n(x)\to 0$ for $\mu$-a.e. $x$. From now on we work on such a full measure set, on which we can further assume that $m^x\in\P(Y)$. Assume that 
we had that $|supp(m^x)|>1$. Then there would exist disjoint compact sets $K^1_x$, $K^2_x\subset Y$ with $M_x=\min\{m^x(K^1_x),m^x(K^2_x)\}>0$. Obviously $\iota(K^1_x),\iota(K^2_x)$ are also disjoint compact sets, so $D_x:=D\big(\iota(K^1_x),\iota(K^2_x)\big)>0$. By the triangle inequality, $\max\{D\big(F_n(x),\iota(K^1_x)\big),D\big(F_n(x),\iota(K^2_x)\big)\}\geq D_x/2 $, thus $G_n(x)\geq M_xD_x/2$, 
yielding a contradiction. We conclude that $\mu$-a.s.\ $|supp(m^x)|= 1$ and therefore 
 we must have $\iota\big(f(x)\big):=\lim_n f_n(x)$ exists, for some $f:X\to Y$ Borel. Thus $m^x(\mathrm dy)=\delta_{f(x)}(\mathrm dy),\mu-a.s.$, which proves~\ref{enu:11}.
\end{proof}

Observe that it is crucial for point~\ref{enu:44} in Lemma \ref{lem graph} that we embedded $Y$ in the Hilbert cube. Indeed, it $X$ is connected and $Y$ discrete, the only continuous functions $f:X\to Y$ are the constants. We now present a result which is interesting in its own:

\begin{proposition}\label{lem polish crucial}
Let $X$ and $Y$ be Polish spaces. Then \[S\,:=\,\{m\in\P(X\times Y)\colon m\big(\text{Graph}(f)\big)=1,\,\, \mbox{some Borel }f\colon X\to Y\,\},\]
with the relative topology inherited from $\P(X\times Y)$, is Polish too.
\end{proposition}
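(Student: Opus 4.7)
The plan is to apply Alexandrov's classical theorem: a subspace of a Polish space carries a Polish topology (in its relative topology) if and only if it is a $G_\delta$ set. Since $X\times Y$ is Polish, so is $\P(X\times Y)$ with the weak topology, hence it suffices to exhibit $S$ as a countable intersection of open sets in $\P(X\times Y)$.

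To that end, I would use the nontrivial equivalence \ref{enu:11}$\iff$\ref{enu:44} in Lemma~\ref{lem graph}, which rewrites membership in $S$ purely in terms of \emph{continuous} test functions. Namely, define the functional
\[
h(m)\,:=\,\inf\left\{\int_{X\times Y} D\bigl(F(x),\iota(y)\bigr)\,m(\mathrm dx,\mathrm dy) \ :\ F\colon X\to [0,1]^\NN\ \text{continuous}\right\},
\]
so that by the lemma $S=\{m\in\P(X\times Y) : h(m)=0\}$. For each fixed continuous $F$, the function $(x,y)\mapsto D(F(x),\iota(y))$ is continuous on $X\times Y$ (composition of the continuous $F$, the embedding $\iota$, and the metric $D$) and bounded by $1$ (since $D\bigl((a_n),(b_n)\bigr)\leq \sum_n 2^{-n}=1$ on $[0,1]^\NN$). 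Hence the map $m\mapsto \int D(F(x),\iota(y))\,\mathrm dm$ is continuous on $\P(X\times Y)$ for the weak topology.

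Being a pointwise infimum of continuous functions, $h$ is upper semicontinuous, so every strict sublevel set $\{m : h(m)<1/n\}$ is open. Since $h\geq 0$, we obtain
\[
S\,=\,\{h=0\}\,=\,\bigcap_{n=1}^\infty \{m : h(m)<1/n\},
\]
exhibiting $S$ as a $G_\delta$ subset of $\P(X\times Y)$. By Alexandrov's theorem, $S$ with the relative topology is Polish.

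The only genuinely subtle point is the restriction to \emph{continuous} $F$ in the characterization of $S$: if we only had Borel $F$ available, the corresponding evaluation maps on $\P(X\times Y)$ would not automatically be continuous in the weak topology and the above semicontinuity argument would collapse. This is exactly why the detour through the Hilbert-cube embedding in Lemma~\ref{lem graph} was carried out beforehand, and that lemma is what makes the present argument go through cleanly.
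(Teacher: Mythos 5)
Your proposal is correct and is essentially the paper's proof: both rely on Lemma~\ref{lem graph}, equivalence \ref{enu:11}$\iff$\ref{enu:44}, to characterize $S$ via continuous maps $F\colon X\to[0,1]^\NN$, observe that $m\mapsto\int D(F(x),\iota(y))\,\mathrm dm$ is weakly continuous for fixed continuous $F$, deduce that $S$ is $G_\delta$, and invoke the Alexandrov/Kechris characterization of Polish subspaces. The only difference is cosmetic: the paper writes $S=\bigcap_n\bigcup_F\{m:\int D(F(x),\iota(y))\,\mathrm dm<1/n\}$ directly, whereas you package the union over $F$ as upper semicontinuity of the infimum functional $h$; these are the same decomposition.
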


\begin{proof}
Let $\rho$ be a compatible metric for $Y$, which we may assume bounded.
By Lemma \ref{lem graph} we have 
\begin{align}\label{eq S}
\textstyle S= \bigcap_{n\in\mathbb N}\bigcup\limits_{F:X\to [0,1]^\NN \mbox{ continuous}}\{m\in \P(X\times Y)\colon \int D\big(F(x),\iota(y)\big)  m(\mathrm dx,\mathrm dy)< 1/n\},
\end{align}
where $\iota:Y\to [0,1]^\NN$ is an embedding. Since $(x,y)\mapsto D\big(F(x),\iota(y)\big)$ is continuous bounded if $F$ is continuous, the set in curly brackets is open in the weak topology. Thus the union of these is open too and we get that $S$ is a $G_\delta$ subset. We conclude by employing \cite[Theorem 3.11]{kechris}.
\end{proof}

\begin{theorem}\label{thm polish}
The weak nested topology on $\P(\R^N)$ is Polish.
\end{theorem}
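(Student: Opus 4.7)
My plan is to realize $(\P(\R^N),\text{weak nested})$ as a $G_\delta$ subspace of the Polish space $(\P(R_{1:N}),\text{weak})$ via the map $I$ of \eqref{eq I}, and then invoke the Alexandrov theorem \cite[Theorem~3.11]{kechris}. The weak nested topology is by definition the initial topology for $I$, so once one checks (exactly as in Theorem~\ref{thm: embedding}) that $I$ is injective, $I$ automatically becomes a homeomorphism from $\P(\R^N)$ onto its image. It thus suffices to show that $I(\P(\R^N))$ is a $G_\delta$ subset of $\P(R_{1:N})$.

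Denote by $I':\P(\R^{N-1})\to\P(R_{2:N})$ the analogous embedding one dimension lower. Unwinding \eqref{eq I}, one sees that $I[\mu]$ is the law under $X_1\sim p^1_*\mu$ of $(X_1,I'[\mu^{X_1}])$, so $I[\mu]$ is concentrated on the graph of the Borel map $x_1\mapsto I'[\mu^{x_1}]$, and this graph is contained in $\R\times I'(\P(\R^{N-1}))$. I would then prove by induction on $N$ the recursive description
\[
    I(\P(\R^N))\;=\;G_N\cap A_N,
\]
where $G_N:=\{m\in\P(R_{1:N})\colon m(\text{Graph}(f))=1\text{ for some Borel }f:\R\to\P(R_{2:N})\}$ and $A_N:=\{m\in\P(R_{1:N})\colon m(\R\times I'(\P(\R^{N-1})))=1\}$ (the statement being vacuous for $N=1$).

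Granting this characterization, the $G_\delta$ property follows inductively. Writing $I'(\P(\R^{N-1}))=\bigcap_n U_n$ with $U_n$ open (available by the inductive hypothesis), each set $\{m:m(\R\times U_n)=1\}$ is $G_\delta$ by the Portmanteau theorem (since $m\mapsto m(V)$ is lower semicontinuous for open $V$, and ``$=1$'' is a countable intersection of ``$>1-1/k$'' conditions), whence $A_N$ is $G_\delta$. On the other hand $G_N$ is $G_\delta$ by a direct application of Proposition~\ref{lem polish crucial} with $X=\R$ and $Y=\P(R_{2:N})$. The intersection is therefore $G_\delta$, and Alexandrov's theorem concludes the proof.

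\textbf{The main obstacle.} The only non-routine step is the surjectivity $G_N\cap A_N\subset I(\P(\R^N))$ of the recursive description. Given $m\in G_N\cap A_N$ concentrated on the graph of a Borel $f:\R\to\P(R_{2:N})$ with $f(x_1)\in I'(\P(\R^{N-1}))$ almost surely, one must produce a Borel kernel $x_1\mapsto \nu_{x_1}\in\P(\R^{N-1})$ with $I'[\nu_{x_1}]=f(x_1)$; the measure $\mu:=p^1_*m\otimes \nu$ will then satisfy $I[\mu]=m$. The existence of this Borel lift relies on the Borel measurability of $I'$ (which follows from a joint-measurability result for the disintegration map) together with the Lusin--Souslin theorem, guaranteeing that a Borel injection between standard Borel spaces admits a Borel inverse on its (Borel) image.
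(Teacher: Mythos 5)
Your proof is correct and follows the same strategy as the paper: embed $\P(\R^N)$ via $I$ into the Polish space $\P(R_{1:N})$, show the image is $G_\delta$, and conclude via Proposition~\ref{lem polish crucial} and Alexandrov's theorem (\cite[Theorem 3.11]{kechris}). You are, however, noticeably more careful in the inductive step for $N\geq 3$. The paper works out $N=2$ in full, where $I[\P(\R^2)]$ is \emph{exactly} the set of graph-concentrated measures on $\R\times\P(\R)$, and then dispatches general $N$ with ``the case for general $N$ is identical \dots\ by reverse induction''; taken literally, this misses the point that for $N\geq 3$ the image $I[\P(\R^N)]$ is \emph{not} all of $G_N$ but only the intersection $G_N\cap A_N$, where $A_N$ enforces that the second coordinate lands in $I'[\P(\R^{N-1})]$. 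Your decomposition $I[\P(\R^N)]=G_N\cap A_N$, together with the Portmanteau/lower-semicontinuity argument that $A_N$ is $G_\delta$ (given the inductive hypothesis that $I'[\P(\R^{N-1})]$ is $G_\delta$), makes this precise. You also correctly flag the nontrivial ingredients behind the surjectivity inclusion $G_N\cap A_N\subset I[\P(\R^N)]$ --- namely the Borel measurability of $I'$ and the Lusin--Souslin theorem to invert the injection $I'$ on its Borel image --- which are invisible in the paper's $N=2$ argument only because there $I'$ is the identity map on $\P(\R)$.
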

\begin{proof}

For $N= 2$ we have $\P(R_{1:2})=\P(\R\times\P(\R))$ and, by definition, $\P(\R^2)$ equipped with the weak nested topology is homeomorphic to $I[\P(\R^2)]$ equipped with the relative topology inherited from $\P(R_{1:2})$. We have
\[I[\P(\R^2)] = \left\{P \in \P(\R\times\P(\R)):\,\, P\big(\text{Graph}(f)\big)=1,\text{ some Borel }f:\R\to \P(\R)\,  \right\}. \]
To wit, if $P\in I[\P(\R^2)]$, then by definition of the embedding $I$ we have $P=(id,T)_*(p^1_* \mu)$ for some $\mu\in\P(\R^2)$ and $T(x)=\mu^x$ (see also the proof of Theorem \ref{thm: embedding}). Taking $f=T$ we then get that $P$ belongs to the right hand side above. Conversely, given $P$ in the right hand side, we denote by $\mu_1$ its first marginal and define $\mu^x(dy):=\delta_{f(x)}(dy)$. The measure $\mu(dx,dy):=\mu_1(dx)\mu^x(dy)\in\P(\R^2)$ satisfies $I[\mu]=P$.

By Proposition \ref{lem polish crucial} we conclude that $I[\P(\R^2)]$ is Polish and then so is $\P(\R^2)$, as desired. The case for general $N$ is identical; one observes by reverse induction that if $\P(R_{t:N})$ is Polish, then so is $\P(R_{t-1:N})$ using the above arguments.
%
\end{proof}

{
Proposition \ref{lem polish crucial}, and more specifically \eqref{eq S}, permit to actually find a compatible complete metric for the weak nested topology. The embedding into the Hilbert cube would make such metric look more complicated than necessary. As we argue now, there is a way to identify a slightly less abstract compatible complete metric. 
 For simplicity of notation we just consider $N=2$ here:

\begin{corollary}\label{coro complete metric}
Let $\rho$ be a bounded metric compatible with the weak topology on $\P(\R)$ and $d^w$ a complete metric compatible with the weak topology on $\P(\R\times \P(\R))$. Then the weak nested topology on $\P(\R^2)$ is generated by the complete metric
\begin{align}\label{eq complete metric}
d^{wnt}(P,Q):= d^w(I[P],I[Q])+ \sum_{n\in\mathbb{N}} 2^{-n}\wedge \left | \frac{1}{d^w(I[P],A_n)} - \frac{1}{d^w(I[Q],A_n)} \right | ,
\end{align}
where $A_n:=\{m\in \P(\R\times \P(\R)):\, \int \rho(F(x),y)m(\mathrm dx,\mathrm dy)\geq 1/n ,\,\forall \,\, F:\R\to\P(\R)\,\,\mbox{continuous}\}$, with the embedding $I$ as in \eqref{eq I} and
$$ d^w(\cdot,A_n):= \inf_{m\in A_n} d^w(\cdot,m) .$$
\end{corollary}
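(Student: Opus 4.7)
The plan is to recognize the formula~\eqref{eq complete metric} as the classical construction of a complete metric on a $G_\delta$ subset of a Polish space, applied to $S:=I[\P(\R^2)]$ inside the complete Polish space $(\P(\R\times\P(\R)), d^w)$, and then to transfer that metric back to $\P(\R^2)$ through the embedding~$I$.

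As a first step I would rewrite $S$ as a $G_\delta$ set in the concrete form
\[
S = \bigcap_{n\in\NN} U_n, \qquad U_n:= \bigcup_{F\,\mathrm{continuous}}\Bigl\{m\in\P(\R\times\P(\R)): \int \rho(F(x),y)\,m(\mathrm dx,\mathrm dy) < 1/n\Bigr\},
\]
so that $A_n = U_n^c$. This is Proposition~\ref{lem polish crucial} but with the Hilbert-cube metric on $\P(\R)$ replaced by the compatible bounded metric $\rho$; each $U_n$ is open because $(x,y)\mapsto \rho(F(x),y)$ is bounded continuous whenever $F$ is continuous. The implication (iv)$\Rightarrow$(i) of Lemma~\ref{lem graph} goes through verbatim, since the argument only uses that $\rho$ separates disjoint compact subsets of $\P(\R)$; for the reverse direction (i)$\Rightarrow$(iv) I would combine Lusin's theorem with Dugundji's extension (noting that $\P(\R)$ is a convex subset of the locally convex space of signed measures) to approximate any Borel graph selector $f:\R\to\P(\R)$ by $\rho$-continuous maps in $L^1(\mu;\P(\R))$.

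Next I would invoke the standard descriptive-set-theoretic fact that for any $G_\delta$ set $S=\bigcap_n U_n$ in a complete metric space $(X,d^w)$ with closed complements $A_n$, the formula
\[
d_S(\xi,\eta):= d^w(\xi,\eta) + \sum_{n\in\NN} 2^{-n}\wedge\Bigl|\frac{1}{d^w(\xi, A_n)} - \frac{1}{d^w(\eta, A_n)}\Bigr|
\]
is a complete metric on $S$ compatible with the subspace topology. The verifications are routine: $d^w(\xi, A_n)>0$ on $S$ since $A_n\cap S=\emptyset$, so each summand is finite; the first term alone recovers the subspace topology on $S$; and the remaining terms force any $d_S$-Cauchy sequence to stay uniformly bounded away from every $A_n$, so its $d^w$-limit in $X$ must lie in $\bigcap_n U_n = S$, granting completeness.

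Finally, by Definition~\ref{def:Information} the weak nested topology on $\P(\R^2)$ is the initial topology for the injection $I:\P(\R^2)\to \P(\R\times\P(\R))$, so $I$ is a homeomorphism onto $S$ equipped with its relative topology. The pullback metric $d^{wnt}(P,Q):=d_S(I[P], I[Q])$ is then a complete metric on $\P(\R^2)$ generating the weak nested topology, and unwinding the definitions produces exactly formula~\eqref{eq complete metric}. The main technical obstacle is the first step: replacing the Hilbert-cube metric from Lemma~\ref{lem graph} by $\rho$ hinges on the density of $\rho$-continuous $\P(\R)$-valued maps in the Borel ones, which is non-trivial and relies on the Lusin/Dugundji combination outlined above.
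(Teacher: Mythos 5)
Your proposal is correct and follows essentially the same route as the paper: identify $I[\P(\R^2)]$ as the $G_\delta$ set $\bigcap_n A_n^c$ in the complete space $(\P(\R\times\P(\R)),d^w)$, apply the standard Kechris-style construction of a compatible complete metric on a $G_\delta$ subset, and pull back along the homeomorphism $I$. The one place where you elaborate more than the paper is the replacement of the Hilbert-cube embedding $\iota$ by the intrinsic bounded metric $\rho$ on $\P(\R)$; where you invoke Lusin plus Dugundji's extension theorem (valid since $\P(\R)$ is a convex subset of a locally convex space), the paper simply cites an external reference for the same ``Tietze-type'' extension argument, so the underlying mathematics is identical.
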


\begin{proof}
We first observe that for Lemma \ref{lem graph} and $Y=\P(\R)$, we can by-pass the embedding into the Hilbert cube. One way to do this, is to follow the ``Tietze extension'' argument in the proof of \cite[Proposition C.1]{CDLmeanfield}, establishing the equivalence of (1) and (4) in Lemma \ref{lem graph} where now the continuous functions go from $X=\R$ to $\P(\R)$. We can thus write \eqref{eq S}, in the case $Y=\P(\R)$, without the embedding $\iota$. Using this and following the proof of \cite[Theorem 311]{kechris} we find a compatible complete metric for $I[\P(\R^2)]$ with the relative topology inherited from $\P(\R\times\P(\R))$, via:
$$\textstyle I[\P(\R^2)]\ni\bar{P},\bar{Q}\mapsto d^w(\bar{P},\bar{Q})+ \sum_{n} 2^{-n}\wedge \left | d^w(\bar{P},A_n)^{-1} - d^w(\bar{Q},A_n)^{-1} \right |.$$
This is then transformed into a complete metric for  $\P(\R^2)$ via the homeomorphism $I$, yielding \eqref{eq complete metric}.
\end{proof}

\begin{remark}
Notice that Example~\ref{rem non complete} shows that the weak nested topology is strictly stronger than the weak topology for $N\geq 2$. In such case, it also shows that even if a sequence of measures has their support contained in a common compact, there need not exist a convergent subsequence, unlike in the weak topology. 
\end{remark}

 Analogous considerations show that $\P^p(R_{1:N})$ with the $p$-nested distance is Polish as well. Having established the completion and the Polish character of the $p$-nested distance, it remains an open question, whether there is a more amenable compatible complete metric than the one found in Corollary \ref{coro complete metric}. 

\section{Extreme points of related sets}
\label{sec:extreme}

Recall the definition of \emph{bicausal} transport plans $\Pi_{bc}(\mu, \nu)$. With notation as in Definition \ref{def:bicausality}, we also say that $\gamma\in\Pi(\mu,\nu)$ is causal if  
the mappings
\[\R^N \ni x\mapsto \gamma^x(B):=\gamma^x(\{x\}\times B),\] are ${\F_t}$-measurable for any $B \in {\F_t}\subset\R^N$ and $t< N$. This is a weaker condition than bicausality and the set of such couplings is denoted  \[\Pi_{c}(\mu, \nu).\] We will write $\Pi(\mu, \cdot)$ meaning that the second marginal of these couplings is left unspecified, with similar notation for the causal and bicausal case.

We are interested in determining the extreme points of the convex sets
$$ \Pi_{c}(\mu, \cdot) \,\,\,\mbox{ and } \,\,\, \Pi_{bc}(\mu, \cdot). $$
We expect this knowledge to find applications in (discrete-time) stochastic optimization. This will be explored elsewhere. It suffice to say, as in the introduction, that such extreme points are expected to play an important role when one is interested in ``simplifying'' the process law $\mu$ without changing its information structure. Let
 \begin{align}\label{def Monge c}
\Pi_{c}^{Monge}(\mu, \cdot)&:= \left\{ \gamma=(id, T)_*\mu\colon\, T\colon\R^N\to\R^N \mbox{ is Borel and adapted}  \right \},\\
\Pi_{bc}^{Monge}(\mu, \cdot)&:= \left\{ \gamma=(id, T)_*\mu\colon\, T\colon\R^N\to\R^N \mbox{ is Borel, adapted and $\mu$-a.s.\ invertible} \right \},\label{def Monge bc}
\end{align}
where by ``$T$ is $\mu$-a.s.\ invertible'' we mean the existence of a Borel adapted map $R:\R^N\to\R^N $ such that 
$$R\circ T =id \,\, (\mu-a.s.)\,\,\,\,\mbox{ and }\,\,\, T\circ R =id\,\, (T_*\mu-a.s.).$$
We recall that $T=(T^1,\dots,T^N)$ is called adapted if $T^i(x_1,\dots,x_N)=T^i(x_1,\dots,x_i)$ for each $i$.
Mappings having the properties specified in \eqref{def Monge bc} have been called ``isomorphism of filtered probability spaces'' in the literature. Now our main result in this part. We use the notation \emph{ext} and \emph{conv} to denote the extreme points and the convex hull of a set.


\begin{theorem}\label{thm extreme}
We have
$$\Pi_{c}^{Monge}(\mu, \cdot) = \text{ext}\,\,\Pi_{c}(\mu, \cdot),$$
and
$$ \Pi_{bc}^{Monge}(\mu, \cdot) \subset \Pi_{c}^{Monge}(\mu, \cdot) \bigcap \Pi_{bc}(\mu,\cdot) = \text{ext}\,\,\Pi_{bc}(\mu, \cdot).$$
\end{theorem}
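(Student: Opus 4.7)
The proof separates into easy and hard directions. The easy inclusions $\Pi_c^{Monge}(\mu,\cdot) \subseteq \text{ext}\,\Pi_c(\mu,\cdot)$, $\Pi_c^{Monge}(\mu,\cdot) \cap \Pi_{bc}(\mu,\cdot) \subseteq \text{ext}\,\Pi_{bc}(\mu,\cdot)$, and $\Pi_{bc}^{Monge}(\mu,\cdot) \subseteq \Pi_c^{Monge}(\mu,\cdot) \cap \Pi_{bc}(\mu,\cdot)$ all rest on the classical observation that $(id,T)_*\mu$, being concentrated on the graph of $T$, cannot be split non-trivially inside $\Pi(\mu,\cdot)$: if $(id,T)_*\mu = \lambda\gamma_1 + (1-\lambda)\gamma_2$ with $\lambda\in(0,1)$, each $\gamma_i$ is absolutely continuous with respect to $(id,T)_*\mu$ and hence concentrated on the same graph, forcing $\gamma_1 = \gamma_2 = (id,T)_*\mu$. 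That $(id,T)_*\mu$ is bicausal when $T$ is an isomorphism of filtered probability spaces is a direct verification of Definition~\ref{def:bicausality} using the adaptedness of both $T$ and its inverse $R$.

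For $\text{ext}\,\Pi_c(\mu,\cdot) \subseteq \Pi_c^{Monge}(\mu,\cdot)$, the plan is to argue by contrapositive and produce a non-trivial convex decomposition inside $\Pi_c(\mu,\cdot)$. Any $\gamma \in \Pi_c(\mu,\cdot)$ admits a causal factorization
\[
\gamma(dx,dy) = \mu(dx)\,K_1(x_1;dy_1)\,K_2(x_1,x_2,y_1;dy_2)\cdots K_N(x_1,\dots,x_N,y_1,\dots,y_{N-1};dy_N),
\]
where the kernel $K_t$ depends on $x$ only through $(x_1,\dots,x_t)$. If $\gamma \notin \Pi_c^{Monge}(\mu,\cdot)$, some $K_t$ fails to be Dirac on a set of positive $\gamma$-measure, whence a standard measurable selection argument produces a bounded $h(x_1,\dots,x_t,y_1,\dots,y_t)$ with $\int h\,K_t(\cdot;dy_t) = 0$ identically and $h \not\equiv 0$ $\gamma$-a.s. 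Then $K_t^\pm := (1\pm\epsilon h)K_t$ are distinct probability kernels for $\epsilon>0$ sufficiently small, still depending on $x$ only through $(x_1,\dots,x_t)$. Re-inserting them into the factorization gives distinct $\gamma^\pm \in \Pi_c(\mu,\cdot)$ with $\gamma = (\gamma^+ + \gamma^-)/2$, contradicting extremality.

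The analogous converse $\text{ext}\,\Pi_{bc}(\mu,\cdot) \subseteq \Pi_c^{Monge}(\mu,\cdot) \cap \Pi_{bc}(\mu,\cdot)$ follows the same pattern, but the perturbation must now preserve bicausality, and this is where the main obstacle lies. Using the successive disintegration of Proposition~\ref{prop:equiv charact} together with the first-marginal condition in \eqref{eq: marg mu}, I would factor each joint kernel as $\gamma^{x_1,\dots,x_t,y_1,\dots,y_t}(dx_{t+1},dy_{t+1}) = \mu^{x_1,\dots,x_t}(dx_{t+1})\,k_t(x_1,\dots,x_{t+1},y_1,\dots,y_t;dy_{t+1})$. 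Not being adapted-Monge means some $k_t$ is non-Dirac on a set of positive measure. A valid perturbation $h$ must then satisfy $\int h\,k_t(\cdot;dy_{t+1}) = 0$ \emph{and} the additional requirement that $\int \mu^{x_1,\dots,x_t}(dx_{t+1})\,h\,k_t(\cdot;dy_{t+1})$ depend on $(x_1,\dots,x_t,y_1,\dots,y_t,y_{t+1})$ only through $(y_1,\dots,y_t,y_{t+1})$, so that the second-marginal clause of \eqref{eq: marg mu} is preserved; since only the $t$-th kernel is altered, the bicausality conditions at times $s>t$ automatically propagate. The technical hurdle is producing such an $h$: the non-Dirac nature of $k_t$ supplies local freedom for the first constraint, and I expect the second to be satisfiable by localizing to a measurable rectangle of $(x_1,\dots,x_t,y_1,\dots,y_t)$ where $k_t$ admits a clean two-point splitting and choosing $h$ as a suitable signed combination of indicator functions, designed so that the $(x_1,\dots,x_t)$-dependence of the $\mu^{x_1,\dots,x_t}$-averaged perturbation cancels. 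With such $h$, $k_t^\pm := (1\pm\epsilon h)k_t$ yields distinct bicausal $\gamma^\pm \in \Pi_{bc}(\mu,\cdot)$ averaging to $\gamma$, contradicting extremality.
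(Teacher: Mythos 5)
Your treatment of the easy inclusions is fine and coincides with what the paper does. For the hard direction $\text{ext}\,\Pi_c(\mu,\cdot)\subseteq\Pi_c^{Monge}(\mu,\cdot)$, your perturbation argument follows essentially the same philosophy as the paper's \emph{alternative} proof (the one inspired by \cite[IV-43(p.110)]{DeMeA}), though your mechanism is a multiplicative perturbation $(1\pm\epsilon h)K_t$ rather than the paper's restriction-and-renormalization to a half-line $J$; both produce a non-trivial convex splitting of a non-Monge causal plan, and yours is arguably a bit cleaner. What you should know, however, is that the paper's \emph{main} proof is entirely different: it characterizes $\Pi_c(\mu,\cdot)$ and $\Pi_{bc}(\mu,\cdot)$ by linear constraints of the form $\int F\,\mathrm d\gamma=0$ (centered functions of $x$, plus stochastic integrals $\sum_t H_t^y[M_{t+1}^x-M_t^x]$, and symmetrically for the bicausal case), and then invokes Douglas' Theorem to translate extremality into $L^1(\gamma)$-density of a test-function space $V$; an induction in $t$ then forces $y_t$ to be a deterministic adapted function of $x$. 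The paper's alternative perturbation proof, notably, is given \emph{only} for the causal inclusion.

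The real problem with your proposal is the bicausal direction, and you are right to flag it yourself as the ``main obstacle.'' You have correctly identified what the perturbation $h$ needs to satisfy (vanishing of the $k_t$-average so that the $x$-marginal of the step-$t$ kernel stays $\mu^{x_1,\dots,x_t}$, and the requirement that the $\mu^{x_1,\dots,x_t}$-averaged perturbation depend only on the $y$-history so that the second marginal condition in \eqref{eq: marg mu} propagates), and your claim that the conditions at later steps $s>t$ then propagate automatically is indeed correct (a tower-property computation). But the crucial step---actually producing such an $h$---is not carried out; it is deferred to an ``I expect\dots'' This is a genuine gap, not a routine technicality. The difficulty is that when $y_1,\dots,y_t$ are already deterministic functions $T^1,\dots,T^t$ of $x_1,\dots,x_t$ but $T^{(t)}=(T^1,\dots,T^t)$ is not injective, the step-$t$ kernels $\kappa_x$ for different $x$ mapping to the same $y$-history must all have second marginal equal to the common $\nu^{y_1,\dots,y_t}$, and a perturbation must either preserve this common second marginal for each such $\kappa_x$ simultaneously (not always possible: $\kappa_x$ can be an extreme point of $\Pi(\mu^x,\nu^y)$ without being deterministic) or move all of them to a new common second marginal in a coherent way. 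Your sketch of localizing to a rectangle with a two-point split does not address this coherence across different $x$ with the same $y$-image. The paper's Douglas-Theorem route avoids this entirely by working with the symmetric test-function space $\tilde V$ and never constructing an explicit perturbation. So the proposal is a correct alternative for the causal part, but incomplete for the bicausal part, and completing it along your lines would require non-trivial additional work.
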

\begin{proof}
It is clear that $\Pi_{c}^{Monge}(\mu, \cdot)\subset \Pi_{c}(\mu, \cdot)$. From this one sees that $\Pi_{c}^{Monge}(\mu, \cdot)\subset \text{ext}\,\,\Pi_{c}(\mu, \cdot)$, since a coupling supported on the graph of a function cannot arise as the combination of two couplings without this property. Similarly, we have $\Pi_{bc}^{Monge}(\mu, \cdot)\subset \Pi_{c}^{Monge}(\mu, \cdot) \bigcap \Pi_{bc}(\mu,\cdot)\subset \text{ext}\,\,\Pi_{bc}(\mu, \cdot)$.

We first prove that $\Pi_{c}^{Monge}(\mu, \cdot)\supset  \text{ext}\,\, \Pi_{c}(\mu, \cdot)$. It is easy to see from \cite[Proposition 2.4]{BBLZ}, especially part $4.$ therein, that $\gamma \in \Pi_{c}(\mu, \cdot)$ is equivalent to
\[\textstyle\int F \mathrm d\gamma =0\]
for all $F$ of either of the two following forms:
\begin{itemize}
\item[(i)] $F= \phi(x_1,\dots,x_N)-\int \phi(\bar{x}_{1},\dots,\bar{x}_N)\mu(\mathrm d \bar{x}_{1},\dots,\mathrm d\bar{x}_N)$, with $\phi=:\phi^x$ bounded measurable,
\item[(ii)] $F=\sum_{t<N} H_t^y[M^x_{t+1}-M^x_t]$, with $H^y$ a bounded continuous process adapted to the $y$-variables,  $M^x$ a bounded martingale adapted to the $x$-variables (namely $H_t^x=H_t(x_1,\dots,x_t)$ and $M^y_t=M_t(y_1,\dots,y_t)$.
\end{itemize}
We consider now the vector space $V$ generated by the constant $1$ and the functions on $\R^N\times \R^N$ of the form $(i)$ and $(ii)$. Explicitly, we have
$$\textstyle V=\left\{c+\phi^x + \sum_t H^y_t[M^x_{t+1}-M^x_t] :\,\, \text{with }\phi^x,H^y,M^x \text{ as described in }(i)-(ii), c\in\R \right\}.$$
By Douglas Theorem \cite[Ch. V,(4.4)]{ReYo99}, we have that $\gamma\in \text{ext}\,\, \Pi_{c}(\mu, \cdot)$ is equivalent to $V$ being dense in $L^1(\gamma)$. We take $\gamma$ such an extreme point, an arbitrary $i\in\{1,\dots,N\}$ and $h$ Borel bounded function. We will show that $h(y_i)= \int h(\bar{y}_i)\gamma^{x_1,\dots,x_i}(\mathrm d \bar{y}_i)$ holds $\gamma$-a.s. This would immediately imply the existence of measurable functions $T^i$ such that for all $i: y_i=T^i(x_1,\dots,x_i)$ holds $\gamma$-a.s.. This then implies $\gamma\in\Pi_{c}^{Monge}(\mu, \cdot)$. 

We start observing that 
$$\textstyle \int h(y_i)\phi^x \mathrm d \gamma = \int  \left[ h(\bar{y}_i) \gamma^{x_1,\dots,x_N}(\mathrm d \bar{y}_i)\right ]  \phi^x \mathrm d \mu = \int  \left[ h(\bar{y}_i) \gamma^{x_1,\dots,x_i}(\mathrm d \bar{y}_i)\right ]  \phi^x \mathrm d \mu , $$
since by causality $x_{i+1},\dots,x_N$ are $\gamma$-independent of $y_i$ given $x_1,\dots,x_i$. Now we prove the desired $$\textstyle h(y_i)= \int h(\bar{y}_i)\gamma^{x_1,\dots,x_i}(\mathrm d \bar{y}_i),$$
which we do by induction in $i$. For $i=1$, we get
$$ \textstyle\int h(y_1)H_t^y[M^x_{t+1}-M^x_t]\mathrm d \gamma =  \int \left [ h(\bar{y}_1) \gamma^{x_1}(\mathrm d \bar{y}_i)\right ]H_t^y[M^x_{t+1}-M^x_t]\mathrm d \gamma ,$$
since by the same conditional independence argument both sides are equal to $0$ (indeed, $M^x$ must be by causality a martingale in the filtration of the $x$ and $y$ variables). It follows that $$\textstyle \int \left\{h(y_1) - \int h(\bar{y}_1) \gamma^{x_1}(\mathrm d \bar{y}_i)\right\} v(x_1,\dots,x_N,y_1,\dots,y_N)\, \mathrm d \gamma = 0, \,\,\forall v\in V.$$
Since $V$ is dense in $L^1(\gamma)$, we obtain the claim for $i=1$. Now let us suppose this has been proved for all indices $i\leq j$. In order to establish the result for $j+1$, the key is to prove that 
$$ \textstyle\int h(y_{j+1})H_t^y[M^x_{t+1}-M^x_t]\mathrm d \gamma =  \int \left [ h(\bar{y}_{j+1}) \gamma^{x_1,\dots,x_{j+1}}(\mathrm d \bar{y}_{j+1})\right ]H_t^y[M^x_{t+1}-M^x_t]\mathrm d \gamma .$$
But this is true by the same argument as above if $t>j$ (one verifies that both sides are equal to $0$). In case $t\leq j$, by induction we have that $H_t^y=\tilde{H}_t(x_1,\dots,x_t)$ $\gamma$-a.s.\ so we obtain that the l.h.s.\ is equal to $\int \left [ h(\bar{y}_{j+1}) \gamma^{x_1,\dots,x_{N}}(\mathrm d \bar{y}_{j+1})\right ]H_t^y[M^x_{t+1}-M^x_t]\mathrm d \gamma$, and this is the equal to the r.h.s.\ by causality.

We now prove $\Pi_{c}^{Monge}(\mu, \cdot)\supset  \text{ext}\,\, \Pi_{bc}(\mu, \cdot)$. Here $V$ must be replaced by
$$\textstyle \tilde{V}=\left\{\,\,c+\phi^x + \sum_t H^y_t[M^x_{t+1}-M^x_t] +  \sum_t G^x_t[N^y_{t+1}-N^y_t] \,\,\right\},$$
with the obvious extension of the notation used so far. By essentially the same arguments as above one obtains that for $\gamma\in \text{ext}\,\, \Pi_{bc}(\mu, \cdot)$ we have $y_t= T^t(x_1,\dots,x_t)$ $\gamma$-a.s. Indeed the only thing that need be observed, is that under $\gamma$ any martingale with respect to the $y$-filtration remains a martingale if we adjoin the $x$-filtration. This shows $\Pi_{c}^{Monge}(\mu, \cdot)\supset  \text{ext}\,\, \Pi_{bc}(\mu, \cdot)$ and hence  $\Pi_{c}^{Monge}(\mu, \cdot) \bigcap \Pi_{bc}(\mu,\cdot) = \text{ext}\,\,\Pi_{bc}(\mu, \cdot)$. 
\end{proof}


\begin{remark}
The inclusion in Theorem \ref{thm extreme} is strict unless $\mu$ is concentrated in a single point. Indeed, for $a\in\R^N$ denote $T^a(x):=a$ and observe that $\gamma_a:=(id,T^a)_*\mu \in \Pi_{c}^{Monge}(\mu, \cdot) \bigcap \Pi_{bc}(\mu,\cdot) $. However, $\gamma_a \in \Pi_{bc}^{Monge}(\mu, \cdot)$ if and only if $\mu$ is concentrated in a point. 
\end{remark}

We now give an alternative argument for $\text{ext}\,\,\Pi_{c}(\mu, \cdot)\subset \Pi_{c}^{Monge}(\mu, \cdot)$. We think that this method, despite being more involved, it is more hands-on, and so it may be more suitable for future extensions of our results. We thank Daniel Lacker for pointing out the relevance  of this question, and for providing us with the reference \cite[IV-43(p.110)]{DeMeA} that we employ here:

\begin{proof}[Alternative Proof]

We show that if $\gamma$ is {not} in $\Pi_{c}^{Monge}(\mu, \cdot)$, then it can neither be an extreme point of $\Pi_{c}(\mu, \cdot)$. The argument is inspired by \cite[IV-43(p.110)]{DeMeA}. We let $$\tau=\min\{t\in\{1,\dots,N\}\,:\,\#\, \text{supp} \,\gamma^{x_1,\dots,x_t}(\mathrm dy_t)>1\},$$
with the convention $\min\,\emptyset = N+1$. Observe that $\tau$ is an $x$-stopping time in the sense that $\tau\leq t$ is measurable w.r.t.\ $\{x_1,\dots,x_t\}$ and that there exists Borel functions $F^t:\R^t\to \R$ such that $y_s = F^s(x_1,\dots,x_s)$ $\gamma$-a.s.\ on $s<\tau$. We now introduce a random variable $j$ by selecting $$j \in \text{int conv supp}\, \gamma^{x_1,\dots,x_t}(\mathrm dy_y)\,\,\,\, \text{ if }\,\,\,\, \tau=t\leq N, $$
in a measurable way (when $\tau=N+1$ we may simply set $j=0$ say). This is done with usual measurable selection arguments\footnote{Here an explicit construction: Assuming $\gamma^{x_1,\dots,x_N}(\mathrm dy_t)$ has a.s.\ finite mean for all $t$, then the process $z_t:=\text{mean}\gamma^{x_1,\dots,x_N}(\mathrm dy_t)$ is by causality adapted to the filtration of the $x$'s and defining $j:=z_\tau$ would suffice.} and more precisely, we have that $j=j(\tau,x_1,\dots,x_\tau)$. Letting 
$$J:=(-\infty,j),$$
the key point here is that
$$\gamma^{x_1,\dots,x_t}(y_t\in J)>0\,\, \text{ and }\,\, \gamma^{x_1,\dots,x_t}(y_t\notin J)>0, \,\,\,\text{$\gamma$-a.s.\ on }\{\tau=t\leq N\}.$$
Using this fact, we will define $\pi,\tilde{\pi}\in \Pi_{c}(\mu, \cdot)$ such that $\gamma = \lambda \pi+(1-\lambda)\tilde{\pi}$ where $\pi\neq \tilde{\pi}$ for $0< \lambda$ small enough. This will prove that $\gamma$ is not an extreme point. We fix $0<\lambda<1$ and for $t\leq N$ we first introduce
$$
\gamma_t^{x_1\dots,x_N}(\mathrm dy_t,\dots,\mathrm dy_N):= \left \{
\begin{array}{lll}
\frac{\gamma^{x_1,\dots,x_N}(\mathrm dy_t,\dots,\mathrm dy_N)  {\bf 1}_J(y_t)}{\gamma^{x_1,\dots,x_t}(y_t\in J)}&\text{if}\,\, \gamma^{x_1,\dots,x_t}(y_t\in J)>\lambda, \\
\gamma^{x_1,\dots,x_N}(\mathrm dy_t,\dots,\mathrm dy_N) & \text{otherwise}.
\end{array}
\right .
$$
Now we define $\pi$ via
\begin{align*}\textstyle
\pi(\mathrm dx_1,\dots,\mathrm dx_N,\mathrm dy_1,\dots,\mathrm d y_N):= &\mu(\mathrm dx_1,\dots,\mathrm dx_N) \Bigl \{{\bf 1}_{\tau=N+1} \gamma^{x_1,\dots,x_N}(dy_1,\dots,\mathrm dy_N) +{\bf 1}_{\tau=1}\gamma_1^{x_1,\dots,x_N}(dy_1,\dots,\mathrm dy_N) \Bigr . \\ &\textstyle  \Bigl . +\sum_{t=2}^N {\bf 1}_{\tau=t}\gamma^{x_1,\dots,x_{t-1}}(dy_1,\dots,\mathrm dy_{t-1})\gamma_t^{x_1,\dots,x_N}(dy_t,\dots,\mathrm dy_N) \Bigr \} .
\end{align*}
Proving that $\pi\in \Pi_{c}(\mu, \cdot)$ is tedious but straightforward. We illustrate it by showing that $\pi(\mathrm dx_1) = \mu(\mathrm dx_1)$ and $\pi^{x_1,y_1}(\mathrm dx_2)=\mu^{x_1}(\mathrm d x_2)$. Iterating this and using Proposition \ref{prop:equiv charact} (more specifically the analogue result in \cite{BBLZ}) we get $\pi\in \Pi_{c}(\mu, \cdot)$. We start by the computation
\begin{align*}\textstyle \int A(x_2)H(x_1,y_1)\mathrm d\pi =& \textstyle \int \left ( \int A(x_2) \mu^{x_1}(\mathrm d x_2)\right )\Bigl [ H(x_1,y_1){\bf 1}_{\tau\geq 2}\mathrm d\gamma + H(x_1,y_1){\bf 1}_{\tau=1,\gamma^{x_1}(y_1\in J)\leq \lambda}\mathrm d\gamma \Bigr . \\
& \textstyle\Bigl . + \int_J H(x_1,y_1)\frac{\gamma^{x_1}(\mathrm dy_1)}{\gamma^{x_1}(y_1\in J)}{\bf 1}_{\tau=1,\gamma^{x_1}(y_1\in J)> \lambda} \mu(\mathrm dx_1)\Bigr ],
\end{align*}
where we used $\gamma\in \Pi_{c}(\mu, \cdot)$ and that $\tau$ is a stopping time. Taking $A\equiv 1$ and $H=H(x_1)$ this gives $\pi(\mathrm dx_1) = \mu(\mathrm dx_1)$, since the $x$-marginal of $\gamma$ is $\mu$. On the other hand,  $\pi^{x_1,y_1}(\mathrm dx_2)=\mu^{x_1}(\mathrm d x_2)$ is apparent.

We now define $$ \textstyle\tilde{\pi}:=\frac{\gamma-\lambda \pi}{1-\lambda},$$
so that $\gamma = \lambda \pi+(1-\lambda)\tilde{\pi}$ and clearly the $x$-marginal of $\tilde{\pi}$ is $\mu$. On the other hand, under the assumption that $\gamma$ is not in $\Pi_{c}^{Monge}(\mu, \cdot)$, we know that $\tau\leq N$ with positive probability, and so $\gamma\neq \pi$ if $\lambda$ is small enough. To finish the proof it only remains to establish that $\tilde{\pi}$ is a probability measure which is causal.

Since on $\{\tau=t\leq N\}$ we have $(y_1,\dots,y_{t-1})=(F^1(x_1),\dots,F^{t-1}(x_1,\dots,x_{t-1}))$, by the tower property and causality it follows $$\gamma^{x_1,\dots,x_N}(\mathrm dy_1,\dots, \mathrm dy_N)=\gamma^{x_1,\dots,x_{t-1}}(\mathrm dy_1,\dots, \mathrm dy_{t-1})\gamma^{x_1,\dots,x_N}(\mathrm dy_t,\dots, \mathrm dy_N),\,\,\,\text{ on $\{\tau=t\leq N\}$}.$$
Using this we may decompose $\tilde{\pi}$ in the same fashion as in the definition of $\pi$, but where we replace $\gamma_t$ by
$$
\tilde{\gamma}_t^{x_1\dots,x_N}(\mathrm dy_t,\dots,\mathrm dy_N):= \left \{
\begin{array}{lll}
(1-\lambda)^{-1}\gamma^{x_1,\dots,x_N}(\mathrm dy_t,\dots,\mathrm dy_N)  \left ( 1 - \frac{ \lambda{\bf 1}_J(y_t)}{\gamma^{x_1,\dots,x_t}(y_t\in J)}\right )&\text{if}\,\, \gamma^{x_1,\dots,x_t}(y_t\in J)>\lambda, \\
\gamma^{x_1,\dots,x_N}(\mathrm dy_t,\dots,\mathrm dy_N) & \text{otherwise}.
\end{array}
\right .
$$
From this it is apparent that $\tilde{\pi}$ defines a non-negative measure and is actually a probability measure. The causality property of $\tilde{\pi}$ can be proved as for $\pi$.
\end{proof}

\section{The Knothe--Rosenblatt distance}\label{sec:KnotheRosenblatt}

Perhaps the most eminent of bicausal plans (i.e., those participating in the determination of the nested distance) is the so-called increasing Knothe--Rosenblatt rearrangement, which is also known as quantile transform or increasing triangular transform in the literature. See, e.g., \cite{BBLZ} and references therein. Let us introduce some useful notation first. By $F_{\eta}(\cdot)$ we denote the distribution function of a probability measure $\eta$ on the line (we denote $F_{\nu_1}$ the distribution of $p^1_*\nu$) and by  $F^{-1}_{\eta}(\cdot)$ its left-continuous generalized inverse, i.e., $F^{-1}_{\eta}(u) = \inf\left\lbrace y: F_{\eta} (y) \geq u \right\rbrace $. The (increasing $N$-dimensional) \emph{Knothe--Rosenblatt} rearrangement of $\mu$ and $\nu$ is defined as the law $\pi$ of the random vector $(X_1^*,\dots,Y_N^*, X_1^*,\dots,Y_N^*)$, where
\begin{align}\label{quantile transforms}\textstyle
& X_1^* = F_{\mu_1}^{-1} (U_1),\hspace{46pt}  Y_1^* = F_{\nu_1}^{-1} (U_1), \hspace{31pt}\mbox{ and inductively }\\ \nonumber
& X_t^* = F_{\mu^{X_1^*,\dots,X_{t-1}^*}}^{-1} (U_t),\quad Y_t^* = F_{\nu^{Y_1^*,\dots,Y_{t-1}^*}}^{-1} (U_t),\, \text{for } t=2,\dots,N,
\end{align}
for $U_1,\dots,U_N$ independent and standard uniformly distributed random variables. Additionally, if $\mu$-a.s.\ all the conditional distributions of $\mu$ are atomless (e.g., if $\mu$ has a density), then this rearrangement is induced by the map
\[(x_1,\dots,x_N)\mapsto T(x_1,\dots,x_N):=(T^1(x_1),T^2(x_2|x_1),\dots,T^N(x_N|\,x_1, \dots. x_{N-1})),\]
where 
\begin{align}\textstyle
T^1(x_1)&:= \, F_{\nu_1}^{-1}\circ F_{\mu_1}(x_1), \nonumber\\
T^t(x_t|\, x_1,\dots,x_{t-1})& :=\, F_{\nu^{T^1(x_1),\dots,T^{t-1}(x_{t-1}|\,x_1,\dots,x_{t-2})}}^{-1}\circ F_{\mu^{x_1,\dots,x_{t-1}}}(x_t),\,\,\,\, t\geq 2. \label{KRdef}
\end{align}

In this section we reserve the letters $\pi$ and $T$ for this rearrangement (map) and omit its dependence on $(\mu,\nu)$, which is clear from the context. Let us now define a functional on $\P^p(\R^N)\times\P^p(\R^N)$ which we compare with the nested distance in Section~\ref{sec:KRvsNested} below.

\begin{definition}
The \emph{Knothe--Rosenblatt distance} of order $p$ (in short KR distance) is defined by
\begin{equation}\label{KR distance}
d_p^{KR}(\mu,\nu):=\left(\iint d^p \mathrm  d\pi \right )^{1/p},
\end{equation}
where $\pi$ is the Knothe--Rosenblatt rearrangement of $\mu$ and $\nu$.
\end{definition}

\begin{lemma}
\label{KR metric}
The KR distance $d_p^{KR}$ is a metric on $\P^p(\R^N)$ for any $1\leq p<\infty$.
\end{lemma}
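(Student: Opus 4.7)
The plan is to verify the four metric axioms in order, with the triangle inequality being the only point that really needs an idea beyond unwinding definitions. Throughout, I will realize the KR rearrangement $\pi$ of $(\mu,\nu)$ as the law of $(X^*,Y^*)$ in \eqref{quantile transforms}, built from a single sequence of i.i.d.\ uniforms $U_1,\dots,U_N$.

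First, symmetry is immediate because swapping the roles of $\mu$ and $\nu$ in \eqref{quantile transforms} produces $(Y^*,X^*)$, which has the same joint law up to transposition, and $d$ is symmetric in its arguments. Non-negativity is trivial. For finiteness on $\P^p(\R^N)$, note that $d(x,y)^p \leq 2^{p-1}(d(x,x_0)^p+d(y,x_0)^p)$ and, since $\pi\in\Pi(\mu,\nu)$, both integrals are finite by the $p$-moment assumption. For the identity of indiscernibles: if $\mu=\nu$, the quantile transforms coincide and $X^*=Y^*$ a.s., so $d_p^{KR}(\mu,\nu)=0$; conversely, if $d_p^{KR}(\mu,\nu)=0$, then $X^*=Y^*$ a.s., hence the marginals of $\pi$ agree and $\mu=\nu$.

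The main point is the triangle inequality. Given $\mu,\nu,\rho\in\P^p(\R^N)$, I would construct $X^*,Y^*,Z^*$ \emph{simultaneously} from the same uniforms $U_1,\dots,U_N$ via the recursion \eqref{quantile transforms} applied to $\mu$, $\nu$, and $\rho$ respectively. The crucial observation is that the triple is defined so that \emph{each pair has the correct KR law}: by construction, the joint law of $(X^*,Y^*)$ is the KR rearrangement of $(\mu,\nu)$, the joint law of $(Y^*,Z^*)$ is the KR rearrangement of $(\nu,\rho)$, and, since $X^*$ depends only on its own past realizations and $Z^*$ only on its own, the joint law of $(X^*,Z^*)$ is exactly the KR rearrangement of $(\mu,\rho)$. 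Once this ``compatibility'' is in place, the pointwise triangle inequality for $d$ together with Minkowski's inequality gives
\begin{equation*}
d_p^{KR}(\mu,\rho) \,=\, \|d(X^*,Z^*)\|_{L^p} \,\leq\, \|d(X^*,Y^*)\|_{L^p}+\|d(Y^*,Z^*)\|_{L^p} \,=\, d_p^{KR}(\mu,\nu)+d_p^{KR}(\nu,\rho).
\end{equation*}

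The (small) obstacle I expect is formally justifying the ``compatibility'' claim, i.e.\ that plugging the same $U_t$'s into the three quantile recursions really realizes the three pairwise KR rearrangements simultaneously. This is essentially the definition of the KR coupling at each stage (one-dimensional monotone rearrangement of the relevant conditional distributions given the past), combined with the fact that the $Y^*$-process is built the same way whether one pairs it with $X^*$ or with $Z^*$, since in both cases it uses only $U_t$ and the $\nu$-conditionals along its own path. Once this is recorded, everything else is a one-line Minkowski argument.
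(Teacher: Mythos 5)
Your proof is correct and follows essentially the same route as the paper: realize all three processes $X^*,Y^*,Z^*$ on a common probability space via \eqref{quantile transforms} using the same uniforms, note that each pair then has the correct KR joint law, and conclude the triangle inequality from Minkowski. The remaining axioms are handled in the same (routine) way as the paper does.
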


\begin{proof}
Since both $d^p$ and the Knothe--Rosenblatt rearrangement are symmetric, we see that  $d_p^{KR}$ is symmetric. Obviously this distance is non-negative and vanishes exactly when $\mu=\nu$, since one dimensional distributions and conditional distributions fully encode a measure. Finally observe that $d_p^{KR}(\mu,\nu)=\left( E[d(X^*,Y^*)^p]\right)^{1/p}$, where 
$X^*$, $Y^*$ are as in \eqref{quantile transforms}. Given $\eta\in\P^p(\R^N)$ and constructing $Z^*$ as in~\eqref{quantile transforms} so that it is distributed like $\eta$,  the triangle inequality follows from $\left( E[d(X^*,Y^*)^p]\right)^{1/p}\leq \left( E[d(X^*,Z^*)^p]\right)^{1/p}+\left( E[d(Z^*,Y^*)^p]\right)^{1/p}$.
\end{proof}

For its simplicity and the fact that the Knothe--Rosenblatt rearrangement has already found startling applications all over mathematics, one is tempted to explore the connection between the KR and the nested distances. {Furthermore, by the optimality properties of the increasing rearrangement on the line, it is clear that $d_p^{nd}= d_p^{KR}$ for $N=1$ and e.g.\ $d(x,y)=|x-y|$, in which case both metrics coincide with the usual $p$-Wasserstein metric on $\P(\R)$. Can it be that $d_p^{nd}$ is always comparable/similar, if not equal, to $d_p^{KR}$?. We examine this now.}

\subsection{Relationship with the nested distance}
\label{sec:KRvsNested}

By definition we have that 
\begin{align}
	d_p^{\nd}(\mu,\nu)\leq d_p^{KR}(\mu,\nu). \label{trivial}
\end{align} 

We ask,
\begin{enumerate}
	\item\label{enu:1} Is there a constant $C>0$ such that $d_p^{KR}(\mu,\nu)\leq C d_p^{\nd}(\mu,\nu)$ holds, i.e., are the two metrics strongly equivalent?
	
	\item\label{enu:2} Is it the case that $d_p^{\nd}(\mu_n,\mu)\to 0$ implies $d_p^{KR}(\mu_n,\mu)\to 0$, i.e., are the two metrics topologically equivalent? 
\end{enumerate}

{
From now we specialize to the case $N=2$ but the situation is the same for all $N>1$.} The next counterexample shows that the answer to question~\ref{enu:1} is negative:

\begin{example}
	Let $\mu_n:=1/2[\delta_{(1/n,n/2)}+\delta_{(-1/n,-n/2)}]$ and $\nu_n:=1/2[\delta_{(1/n,-n/2)}+\delta_{(-1/n,n/2)}]$ and take $\underline{d}$ the usual distance in $\R$. Observe that any transport plan is bicausal in this setting. Thus, we bound $d_p^{\nd}(\mu_n,\nu_n)$ from above by the value of the decreasing Knotte-Rosenblatt map (associating $\{1/n,n/2\}$ to  $\{-1/n,n/2\}$ and $\{-1/n,-n/2\}$ to $\{1/n,-n/2\}$)
	\[d_p^{\nd}(\mu_n,\nu_n)\leq 2/n,\]
	whereas for the usual (increasing) Knotte-Rosenblatt map we get
	\[d_p^{KR}(\mu_n,\nu_n)= n,\]
and letting $n\to\infty$ we see that~\ref{enu:1} cannot hold.
\end{example}

Actually, even~\ref{enu:2} fails, as the following counterexample demonstrates:

\begin{example}
	Consider the measure $P:=\lambda\otimes\lambda$ on $\Omega:=[0,1]^2$ and the random variables (i.e., the two-stage stochastic processes)
	\[Z_n:=\begin{cases}
		(0,\,u_2)   & \text{if } u_1\le\frac{1}{2},\\
		(\frac{1}{n},\ 1+u_2) & \text{if } u_1> \frac{1}{2}
	\end{cases}\]
	for $n=1,2,\dots $ and
	\[Z_\infty:=\begin{cases}
	(0,\,u_2)   & \text{if } u_1\le\frac{1}{2},\\
	(0,\ 1+u_2) & \text{if } u_1> \frac{1}{2}
	\end{cases}\]
	together with their image measures \[\mu^{(n)}:=P\circ Z_n^{-1}\text{ and }\mu:=P\circ Z_\infty^{-1}.\] It is evident that we have convergence in nested distance, $d_p^{\nd}(\mu_n,\mu)\to0$ as $n\to\infty$, as we may employ the conditional transport maps \[T(x|\,U_1)=x\] on the second stage (i.e., $T(u_2|\,u_1\ge\frac{1}{2})=u_2$  and $T(1-u_2|\,u_1<\frac{1}{2})=1-u_2$).

	To compute the Knothe--Rosenblatt distance we associate $\mu^{(n)}$ with $X^*$ in~\eqref{quantile transforms} and $\mu$ with $Y^*$. Then
	\[X_1^*=\begin{cases}
		0&\text{if }U_1\le\frac{1}{2},\\
		\frac{1}{n}&\text{if }U_1>\frac{1}{2}
	\end{cases}\quad\text{ and }\quad Y_1^*=0,\]
	while
	\[X_2^*=\begin{cases}
		U_2&\text{if }U_1\le\frac{1}{2},\\
		1+U_2&\text{if }U_1>\frac{1}{2},
	\end{cases}\quad\text{but}\quad
	Y_2^*=2\,U_2.\]
	The second components are entirely different ($X_2^*$ depends on $U_1$, while $Y^*_2$ does not) and hence the Knothe--Rosenblatt distance does \emph{not} tend to $0$, $d_p^{KR}(\mu_n,\mu)\not\to0$, as $n\to\infty$. 
\end{example}

	We conclude that~\ref{enu:2} does \emph{not} hold true and the topologies induced by the Knothe--Rosenblatt and the nested distances differ. More generally, we expect that there is no distinguished bicausal transport that can generate a topology compatible with the nested distance for $N>1$. 

%


\section{Summary}\label{sec:Summary}

In this article we investigated fundamental topological properties of the nested distance. In contrast to classical Wasserstein distances, for example, its topology cannot be characterized via integration on test functions, so that complete stochastic programs appear as the natural distinguishing element of the topology induced by the nested distance. 
The nested distance is also not complete, again in contrast to the classical Wasserstein distance. 

We obtained two main results. First, we demonstrated that the metric completion of the nested distance is the space of nested distributions with their classical Wasserstein metric, as introduced in~\cite{Pflug2009}. This provides a connection between two hitherto unrelated mathematical objects. Second, we established that the topology generated by the nested distance is Polish, which we hope opens the way to future applications. Along these lines, we have started in this article the study of extreme points of sets of measures relevant for stochastic optimization.

We finally introduced the Knothe--Rosenblatt distance between processes, which likewise takes into account the filtration structure. Its appeal lies in its apparent simplicity and the eminence of the Knothe--Rosenblatt rearrangement, as well as in the fact that in the one-period setting this distance and the nested distance coincide. In this regard, we demonstrate that when more than one period is considered, the topology generated by the Knothe--Rosenblatt distance is strictly stronger than the nested distance topology.

\bibliographystyle{plain}

\bibliography{biblio_causal_transport,LiteraturAlois,joint_biblio}

\end{document}